\newcommand{\rmnum}[1]{\romannumeral #1}
\newcommand{\Rmnum}[1]{\expandafter\@slowromancap\romannumeral #1@}
\newtheorem{The}{Theorem}
\newtheorem{Lem}{Lemma}
\newtheorem{Def}{Definition}
\newtheorem{Rem}{Remark}
\newtheorem{Con}{Conjecture}
\newtheorem{Exa}{Example}
\newtheorem{Prop}{Proposition}
\newtheorem*{Mthe1}{Theorem 1}
\newtheorem*{Mthe2}{Theorem 4}
\newtheorem*{Mthe3}{Theorem 7}
\newtheorem*{Stru}{Structure of the paper}
\title{Unimodular Smooth Fano Polytopes and their Relation with Ewald Conditions}
\author{Binnan Tu\\ Graduate School of Information Science and Technology, \\ Osaka University}
\date{}
\begin{document}
\maketitle

\begin{abstract}
Smooth Fano polytopes (SFPs) play an important role in toric geometry and combinatorics. In this paper, we introduce a specific subcollection of them, i.e., the unimodular smooth Fano polytopes (USFPs). In Section 2, they are verified to satisfy the three (weak, strong, star) Ewald conditions. Besides, a characterisation of USFPs is provided as a corollary of the famous Seymour's decomposition theorem. Then, we briefly introduce the works by Luis Crespo on deeply monotone polytopes and give a proof of the claim that any deeply monotone polytope is in fact the dual polytope of some USFP. In other words, we extend his results on deeply monotone polytopes to the case of USFPs.
\end{abstract}

\section{Introduction}
Smooth Fano polytopes have been intensively studied for several decades. Their unimodular equivalence classes are convex geometric objects corresponding to the ones of smooth toric Fano varieties. Thus, it allows one to pose, consider and solve problems about them from both combinatorial and algebraic view points. The dual concept of smooth Fano polytopes is known as the monotone (moment) polytopes, which correspond to monotone symplectic manifolds. For more details about them on symplectic geometry, we refer to the book by Cannas da Silva \cite{CdS}. In addition, they also have an emphasis in classical mechanics. See, e.g., \cite{AM}. Up to unimodular equivalence, smooth Fano polytopes and monotone polytopes share the same amount for any fixed dimension. Currently, they have been completely classified until dimension $9$ \cite{LP}, \cite{Obro}. However, it's hard to proceed the process in higher dimension since the number increases rapidly. Meanwhile, people are also interested in seeking for possible bounds on invariants. For example, Casagrande showed that the maximal number of vertices of a smooth Fano $n$-polytope is $3n$ \cite{Cas}. Besides, there are also bounds on volume and lattce points. An interesting example without smoothness can be found in \cite{Nill2}, Theorem $B$ and $C$. For the smooth case, an attractive open question is Ewald conjecture \cite{Ewa}, which claims a volumn bound for any smooth Fano $n$-polytope. A most recent partial result to this conjecture is given by Crespo \cite{CPS}. 
\newline
To finally prove Ewald conjecture, we are motivated to generalize his result to a larger subcollection of smooth Fano polytopes. So, the goal of this paper is to to show that the set of so-called unimodular smooth Fano polytopes is such an extension.

\subsection{Smooth Fano polytopes and monotone polytopes}
\begin{Def}
\textup{Let $P \subset N_R (\cong \mathbb{R}^n)$ be an $n$-dimensional lattice polytope, then it is said to be}
\begin{itemize}
\item[(1)] projective \emph{if it contains the origin as its interior;}
\item[(2)] Fano \emph{if it's projective and any of its vertices is a primitive lattice point;}
\item[(3)] reflexive \emph{if its dual polytope is still a lattice polytope. The dual polytope $P^*$ consists of $x \in \mathbb{R}^n$ s.t. $\langle x,y \rangle \geq -1$ for all $y \in P$, where $\langle x,y \rangle$ is the standard inner product of $\mathbb{R}^n$.}
\end{itemize}
\end{Def}
\noindent
Nowadays, the two `dual' definitions of smooth lattice polytopes are both commonly used by mathematicians. One starts with the polyhedral cone, whose apex is the origin, over a facet of the polytope, e.g., \cite{Obro}. The other regards the corner at a fixed vertex of the polytope as a cone, e.g. \cite{CPS},\cite{McDuff}. Let $N \cong \mathbb{Z}^n$ be a lattice with associated real vector space  $N_R \cong N \otimes_{\mathbb{Z}} \mathbb{R}$ and let $M$ be the dual lattice of $N$ with associated real vector spaces $M_R \cong M \otimes_{\mathbb{Z}} \mathbb{R}$. Then we have the two definitions as follows:

\begin{Def}
\textup{Let $P \subset N_R (\cong \mathbb{R}^n)$ be an $n$-dimensional lattice polytope, then it is said to be}
\begin{itemize}
\item[(4a)] simplicial \emph{if there are precisely $n$ vertices for each facet;}
\item[(5a)] smooth \emph{if it is simplicial and the vertices of any facet of $P$ form a basis of the lattice $N$.} 
\end{itemize}
\end{Def}

\begin{Def}
\textup{Let $Q \subset M_R (\cong \mathbb{R}^n)$ be an $n$-dimensional lattice polytope, then it is said to be}
\begin{itemize}
\item[(4b)] simple \emph{if there are precisely $n$ edges meeting at each fixed vertex;}
\item[(5b)] smooth \emph{if it is simple and the primitive edge-direction vectors at each vertex form a basis of the lattice $M$.}
\end{itemize}
\end{Def}
\noindent
When talking about \emph{smooth Fano polytope}, we always mean the polytope in $N_R$, i.e. Definition $2$. Besides, the smooth reflexive polytope in $M_R$ with Definition $3$ is also called a \emph{monotone polytope}, whose name arises from the research of sympletic toric geometry. In most cases where the definition is clear, we simply use $\mathbb{R}^n$ as our vector space for polytopes. 
\\
\\
\textbf{Unimodular equivalence}
\par
Let $P$ and $P'$ be two lattice polytopes in $\mathbb{R}^n$, then we say that $P$ and $P'$ are \emph{unimodularly equivalent} if there exists a unimodular transformation which maps $P$ to $P'$. Here, a \emph{unimodular transformation} is a linear map whose representing matrix is an integer matrix and has determinant $\pm 1$. All the polytoeps in this paper, if nothing more is mentioned, are up to unimodular equivalence.

\subsection{Ewald conjectures and Ewald conditions}
In the research of smooth Fano polytopes, there is a long standing open question raised by G\"{u}nter Ewald in 1988 \cite{Ewa}. He conjectured that any $n$-dimensional smooth Fano polytope can be embedded into ${[-1,1]}^n$. After that, Mikkel $\O$bro \cite{Obro} mentioned a stronger version of Ewald conjecture and gave a positive answer to the case up to dimension $7$. Since it's hard to prove these Ewald conjectures directly, a natural idea is to find a possible collection of polytopes with more restrictions such that they satisfy the condition of these conjectures, and then `expand' it to the original conjectures. In other words, we are trying to seak for the maximal subset of smooth Fano polytopes that are true for the so-called weak and strong `Ewald conditions'. Moreover, Dusa McDuff introduced another Ewald condition called star Ewald, inspired by problems in toric symplectic geometry \cite{McDuff}. The concrete expression of these conditions are listed below.

\begin{Def}
\textup{Let $P$ be an $n$-dimensional projective polytope, then we say that}
\begin{itemize}
\item \emph{it satisfies} the weak Ewald condition \emph{if $P$ can be embedded into the hypercube ${[-1,1]}^n$ via a unimodular transformation;}
\item \emph{equivalently, it satisfies} the weak Ewald condition \emph{if the symmetric point set $\mathcal{E}(P^*)$ of its dual polytope $P^*$ contains a unimodular basis of $\mathbb{Z}^n$, where the \textit{symmetric point set} is defined to be $$\mathcal{E}(P^*) := P^* \cap (-P^*) \cap \mathbb{Z}^n ;$$ }
\item \emph{it satisfies} the strong Ewald condition \emph{if for any vertex $v \in P$, there exists a unimodular transformation $\phi$, s.t. $\phi(P) \subset {[-1,1]}^n$ and $\phi(v)=\sum_{i=1}^{n} e_i$, where $e_i$ are standard basis vectors of $\mathbb{R}^n$;}
\item \emph{equivalently, it satisfies} the strong Ewald condition \emph{if for any facet $F$ of its dual polytope $P^*$, the set $\mathcal{E}(P^*) \cap F$ contains a unimodular basis of $\mathbb{Z}^n$;}
\item \emph{$P^*$ satisfies} the star Ewald condition \emph{if every face of $P^*$ satisfies the star Ewald condition. A face $f$ of $P^*$ is said to be} star Ewald \emph{if there exists a symmetric point $\lambda \in \mathcal{E}(P^*)$ s.t. $\lambda \in \mathrm{Star}^{*}(f)$ and $-\lambda \notin \mathrm{Star}(f)$. Here, $\mathrm{Star}(f)$ is the union of all facets (face of codimension $1$) containing face $f$, and $\mathrm{star}(f)$ is the union of all ridges (face of codimension $2$) containing face $f$. Besides, $\mathrm{Star}^*(f):= \mathrm{Star}(f) \backslash \mathrm{star}(f)$.}

\end{itemize}
\end{Def}

\noindent
Furthermore, Benjamin Nill generalized the weak Ewald conjecture in 2009 to the case where reflexivity of the dual polytope is not required \cite{Nill}:
\begin{Con}[Generalized Ewald conjecture \cite{Nill}] 
Let $P \subset N_\mathbb{R}$ be an $n$-dimensional polytope whose dual polytope $P^*$ is a smooth projective lattice polytope in $M_\mathbb{R}$, then $P$ lies in the hypercube ${[-1,1]}^n$.
\end{Con}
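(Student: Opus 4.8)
The plan is to discard $P$ at once and work entirely with its dual, using the combinatorial reformulation of the weak Ewald condition recorded in Definition~5. Since $P \subset [-1,1]^n$ holds (up to unimodular equivalence) precisely when the symmetric point set $\mathcal{E}(P^*) = P^* \cap (-P^*) \cap \mathbb{Z}^n$ contains a unimodular basis of $\mathbb{Z}^n$, and since by hypothesis $P^*$ is a smooth projective lattice polytope in $M_\mathbb{R}$, the entire statement reduces to a self-contained problem about $P^*$: exhibit $n$ linearly independent lattice points $v_1,\dots,v_n$ with $\pm v_i \in P^*$ for each $i$ and with the matrix $(v_1,\dots,v_n)$ of determinant $\pm 1$. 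I would establish the two halves separately, first that such points span $M_\mathbb{R}$ and then that a spanning choice can be made unimodular.

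First I would extract a unimodular frame from the smoothness hypothesis. Fixing any vertex $u$ of $P^*$, Definition~3 guarantees that the $n$ primitive edge-direction vectors $b_1,\dots,b_n$ emanating from $u$ form a basis of the lattice $M$. This gives a canonical unimodular reference frame for free. The catch, and the source of the real work, is that the $b_i$ are edge directions at a single vertex and are in general \emph{not} symmetric points of $P^*$; so the genuine content is to deform or replace the $b_i$ by genuine symmetric lattice points while controlling the change-of-basis determinant. Here projectivity (the origin sits in the interior of $P^*$) is the lever: for each direction I would search the antipodal region of $P^*$ for a primitive lattice point $w_i$ with $-w_i \in P^*$, and then bound the determinant of the transition matrix from $(b_i)$ to $(w_i)$.

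To make the determinant bookkeeping tractable I would organize the search along a decomposition of the defining data of $P^*$. In the totally unimodular setting, Seymour's decomposition theorem (whose consequence for USFPs is the characterisation stated in Section~2) splits the constraint matrix into network-type and small sporadic blocks; this lets one assemble the required basis block by block from lower-dimensional or skew-symmetric pieces, each of which is handled either directly or by Crespo's result on deeply monotone polytopes via the duality between deeply monotone polytopes and USFPs. For a USFP one therefore obtains not only the weak but the strong and star Ewald conditions, because the block structure also controls which symmetric points lie on a prescribed facet of $P^*$, which is exactly the facetwise spanning demanded by the strong condition.

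The hard part will be leaving the structured subclass. An arbitrary smooth projective $P^*$ carries no total unimodularity, so Seymour's decomposition is simply unavailable, and there is no a priori reason that the symmetric lattice points of $P^*$ should span $M_\mathbb{R}$, let alone span it unimodularly; this is precisely where the conjecture is still open. My realistic plan is thus to prove the statement completely on the totally unimodular subclass (the USFPs), pin down their combinatorics through the characterisation corollary, verify all three Ewald conditions there, and then attempt to bootstrap toward the general polytope by degenerating or reducing an arbitrary smooth projective $P^*$ onto this subclass. It is this last reduction step whose validity in full generality I do not expect to be able to secure, and I would present the USFP case as the rigorous partial result rather than claim the conjecture outright.
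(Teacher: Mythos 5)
The statement you were asked to prove is Conjecture~1 of the paper (Nill's Generalized Ewald conjecture), and the paper does not prove it: it is stated as an open problem, with only partial results known (Crespo in dimensions $2$ and $3$), and the paper's actual contribution is to verify the weak, strong and star Ewald conditions for the restricted subclass of unimodular smooth Fano polytopes. Your proposal is therefore honest but, by your own admission in the final paragraph, not a proof: the ``bootstrap toward the general polytope'' step is exactly the open content of the conjecture, and you correctly do not claim to supply it. There is no argument here to compare against a proof in the paper, because no such proof exists; the only fair assessment is that you have reproduced the paper's framing (reduce the general conjecture to a structured subclass, prove everything there) rather than resolved the statement. One small caution on your opening reduction: in the generalized conjecture reflexivity of $P^*$ is not assumed, so $P$ need not be a lattice polytope, and the equivalence of ``$P\subset[-1,1]^n$'' with ``$\mathcal{E}(P^*)$ contains a unimodular basis'' should be rechecked in that setting rather than quoted from Definition~4, which is phrased for projective lattice polytopes.

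Even your sketch of the partial (USFP) result diverges from what the paper actually does, in two respects worth flagging. First, the paper's proof of the strong and star Ewald conditions for USFPs (Theorem~1) does not route through Seymour's decomposition at all: it works directly with the totally unimodular standard form, using Schrijver's closure of total unimodularity under pivoting (Lemma~1) and the row-splitting characterisation of TU matrices (Lemma~2) to produce the required symmetric points explicitly. Seymour's theorem is used only for the separate characterisation result (Theorem~4), where the point is to exclude the $R_{10}$ block; a block-by-block assembly of a unimodular basis of symmetric points from a Seymour decomposition, as you propose, would additionally have to control how symmetric points behave across $2$- and $3$-sums, which is not addressed anywhere. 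Second, the logical dependence on Crespo's work runs opposite to what you describe: the paper proves that deeply monotone polytopes are duals of USFPs (Theorem~7) in order to subsume Crespo's Ewald-condition results under the USFP theorem, not to invoke them in its proof. If you intend to present the USFP case rigorously, the direct TU-matrix argument is both the paper's route and the simpler one.
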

A recent result in dimension $2$ and some partial results in dimension $3$ have been verified by Luis Crespo \cite{CPS}. Though it seems more general without the requirement of reflexivity, Nill's conjecture is believed to be implied by the weak Ewald conjecture \cite{CPS}.

\subsection{Main Theorems}
There are three main theorems of this paper:
\begin{Mthe1}
Let $P$ be an $n$-dimensional unimodular smooth Fano polytope with $m$ vertices, then
\begin{itemize}
\item[(\rmnum{1})] $P$ satisfies the strong Ewald condition, and
\item[(\rmnum{2})] the dual polytope $P^*$ of $P$ satisfies the star Ewald condition.
\end{itemize}
\end{Mthe1}

\begin{Mthe2}
Let $P$ be a unimodular smooth Fano polytope. Then its coordinate matrix $M$ can be constructed from graphic matrices by elementary operations, dualizing and $k$-sums for $k=1,2,3$. 
\end{Mthe2}

\begin{Mthe3}
Any deeply monotone polytope is the dual polytope of a unimodular smooth Fano polytope.
\end{Mthe3}

\begin{Stru}
\textup{The contents of this paper are mainly divided into two parts:}
\begin{itemize}
\item[1.] \textup{In Section $2$, we firstly introduce the unimodular smooth Fano polytopes (USFPs, for short). Then, we discuss their relation with weak, strong and star Ewald conditions, where the proof of Theorem $1$ is given. After that, we recall some backgrounds on matroid theory and use Seymour's decomposition theorem to induce the characterisation of USFPs. }
\item[2.] \textup{In Section $3$, we briefly recall the works of Luis Crespo on deeply monotone polytopes (DMPs, for short). Then, in Subsection $3.2$, The relation between DMPs and USFPs is clarified by Theorem $7$. And at the end of this paper, we list the numbers of several types of polytopes up to dimension $5$ and propose a natural conjecture of UT-free monotone polytopes and USFPs by observation.}
\end{itemize}
\end{Stru}

\section{Unimodular Smooth Fano Polytopes}
Let $P \subset \mathbb{R}^n$ be a lattice polytope, then the (coordinate) matrix of $P$ is an $m \times n$-matrix $(m>n)$ whose each row is the coordinates of a vertex. The polytope $P$ is said to be \textit{unimodular} if any $n$ vertices form either a basis of the lattice or are linearly dependent. In other words, any $n \times n$-submatrix has determinant $\pm 1$ or $0$. 
\\
\\
\noindent
\textbf{Standard form.} There is a natural form of the coordinate matrix of a USFP (an abbreviation for unimodular smooth Fano polytope), which is unique up to unimodular equivalence. Concretely speaking, let $M$ be the $m \times n$-matrix ($m >n$) of a USFP and choose any facet of this polytope. Denote its matrix as $N$ and the rest part in the matrix of the polytope as $R$, then via a unimodular transformation $T$, we have:
$$
T: 
M=
\begin{pmatrix}
R
\\ 
N
\end{pmatrix}
\longrightarrow
MN^{-1}=
\begin{pmatrix}
RN^{-1}
\\ 
I_{n\times n}
\end{pmatrix}
$$ 
where $M\cdot N^{-1}$ is called the \textit{standard form} of the matrix of the USFP.
\\
\\

A matrix is said to be \textit{totally unimodular} if any of its square submatrices has determinant $\pm 1$ or $0$. A smooth Fano polytope is said to be \textit{totally unimodular} if its (coordinate) matrix is totally unimodular. 
\begin{Prop}
The standard form of the matrix of a USFP is totally unimodular. 
\end{Prop}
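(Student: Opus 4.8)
The plan is to reduce total unimodularity, which is a statement about all square minors, to the polytope's unimodularity, which constrains only the maximal $n\times n$ minors, by exploiting the identity block $I_{n\times n}$ sitting at the bottom of the standard form $MN^{-1}$. First I would record two preliminaries. Since $P$ is a smooth Fano polytope, the $n$ vertices of the chosen facet form a lattice basis, so $\det N=\pm 1$, the inverse $N^{-1}$ is an integer matrix, and $MN^{-1}$ is an integer matrix whose bottom $n$ rows are exactly $I_{n\times n}$. Moreover, for any choice of $n$ rows of $M$ forming a square submatrix $S$, the corresponding submatrix of $MN^{-1}$ is $SN^{-1}$, with $\det(SN^{-1})=\det(S)/\det(N)=\pm\det(S)\in\{0,\pm 1\}$ by unimodularity of $P$. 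Hence every maximal ($n\times n$) minor of $MN^{-1}$ lies in $\{0,\pm 1\}$.

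Then I would take an arbitrary $k\times k$ submatrix $A$ of $MN^{-1}$, say on rows $I=\{i_1,\dots,i_k\}$ and columns $J$, and show $\det A\in\{0,\pm 1\}$. The case $k=n$ is exactly the maximal-minor fact just established, so assume $k<n$. Let $C$ be the set of the $n-k$ unselected columns, and for each $c\in C$ let $r_c$ denote the row of the identity block whose unique $1$ sits in column $c$. If some chosen row $i_t$ coincides with one of these $r_c$, then that row of $A$ is identically zero on $J$ (its only nonzero entry lies in an unselected column), so $\det A=0$ and we are done. Otherwise $I$ and $\{r_c:c\in C\}$ are disjoint and together index an $n\times n$ submatrix $\widehat{A}$ of $MN^{-1}$, retaining all columns. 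After permuting rows and columns so that the identity rows and the columns of $C$ are moved to the lower-right corner, $\widehat{A}$ becomes block triangular of the form $\left(\begin{smallmatrix} A & * \\ 0 & I_{n-k}\end{smallmatrix}\right)$, whence $\det\widehat{A}=\pm\det A$. Since $\det\widehat{A}\in\{0,\pm 1\}$, we conclude $\det A\in\{0,\pm 1\}$ as well.

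The only genuinely delicate point is the bookkeeping in the extension step: verifying that the completed index set has exactly $n$ rows, which forces the disjointness discussion above, and that the resulting $n\times n$ matrix is block triangular with an identity corner so that its determinant collapses to $\pm\det A$. Everything else follows directly from $\det N=\pm 1$ and the definition of a unimodular polytope, so I expect no serious obstacle. This is in essence the classical observation that a matrix is totally unimodular precisely when its maximal minors are unimodular once an identity block is adjoined, here specialized to the standard form and phrased so as to avoid invoking the matroid machinery developed later in the section.
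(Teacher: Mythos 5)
Your proof is correct and follows essentially the same route as the paper: reduce an arbitrary $k\times k$ minor of the standard form to an $n\times n$ minor by adjoining the identity-block rows corresponding to the unselected columns, producing a block-triangular matrix whose determinant is $\pm\det A$. You are in fact slightly more careful than the paper in handling the degenerate case where a selected row already coincides with one of the needed identity rows (forcing $\det A=0$), but this is a refinement of the same argument, not a different one.
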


\begin{proof}
Let $P$ be an $n$-dimensional USFP and $M$ be its matrix. Then all the $n\times n$-submatrices of $M$ has determinant $\pm 1$ or $0$. Take one of those submatrices whose determinant is $1$ and transform it into the identity matrix $I_{n\times n}$ via elementary column operations (i.e., interchange of two columns, multiplication $-1$ to a column or subtracting one column by another column). The result matrix $M'$ is a standard form and can be written as:
$$
\begin{pmatrix}
m_{11} & m_{12} & m_{13} & \cdots & m_{1n} \\
m_{21} & m_{22} & m_{23} & \cdots & m_{2n} \\
\vdots & \vdots & \vdots & \vdots & \vdots \\
m_{m1} & m_{m2} & m_{m3} & \cdots & m_{mn} \\
1 & 0 & 0 & \cdots & 0 \\
0 & 1 & 0 & \cdots & 0 \\
0 & 0 & 1 & \cdots & 0 \\
\vdots & \vdots & \vdots & \vdots & \vdots \\
0 & 0 & 0 & \cdots & 1 
\end{pmatrix}
$$
Since elementary column operations do not change the determinants of all the $n\times n$-submatrices of $M$, any $n\times n$-submatrices of $M'$ again has determinant $\pm 1$ or $0$. Let $N$ be a $t\times t$-submatrix of $M'$, where $t$ is the cardinality of set $I$ (or $J$), and $I\subset \{1,\dots,m+n\}$ and $J\subset \{1,\dots, n\}$ are row-indices set and column-indices set of $N$, respectively. Then, we can construct an $n\times n$-submatrix $L$ of $M'$ which consists of: 
\begin{itemize}
\item $i$-th row of $M'$, for any $i \in I$;
\item $(m+j)$-th row of $M'$, for any $j \in \{1,\dots,n\} \backslash J$.
\end{itemize}
By the construction above, having $i$-th rows of $M'$ included makes sure that $N$ is a submatrix of $L$ and having $(m+j)$-th rows of $M'$ included provides an identity $(n-t)\times (n-t)$-submatrix. Then, by rearranging rows and columns, we have:
$$
L=
\begin{pmatrix}
N & * \\
0 & Id_{(n-t)\times(n-t)}    
\end{pmatrix}
$$ 
Because of unimodularity, $det(L) = \pm 1$ or $0$. Then, $det(L)= det(N) \cdot det({Id}_{{(n-t)}\times {(n-t)}})= \pm 1$ or $0$. Therefore, $det(N)=\pm 1$ or $0$. Since $N$ is an arbitrary submatrix of $M'$, $M'$ is totally unimodular.

\end{proof}
On the other hand, totally unimodular SFPs are trivially USFPs. Thus, for an SFP, `unimodular' is equivalent to `totally unimodular'.

\subsection{Relation with three Ewald conditions}

From the standard form of USFPs, any of their vertices has to lie in ${[-1,1]}^n$. So it obviously satisfies the weak Ewald condition. In the following proposition, we will see that they are also correct for strong and star Ewald conditions. Before that, two lemmas have to be introduced first.

\begin{Lem}[\cite{AS}, Theorem 19.5]
Let $M$ be a totally unimodular matrix with full-column rank. If $N$ is a square submatrix of $M$ with $det(N)=\pm 1$, then $M\cdot N^{-1}$ is also totally unimodular.
\end{Lem}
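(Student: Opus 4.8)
The plan is to show that total unimodularity is preserved under the pivoting operation $M \mapsto MN^{-1}$, and the argument will closely parallel the block-triangular device already used in the proof of Proposition 1. Since $MN^{-1}$ is defined and $M$ has $n$ columns of full rank, $N$ must be an $n \times n$ submatrix obtained by selecting some $n$-element row-index set, say $R$, together with all $n$ columns; thus $N = M[R,\cdot]$ and $\det N = \pm 1$. Write $A := MN^{-1}$. The first key observation I would record is that $A[R,\cdot] = N N^{-1} = I_n$, i.e. the rows of $A$ indexed by $R$ are exactly the standard unit rows, one for each column. This is precisely what makes the pivoted matrix amenable to the same completion trick.

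To prove that $A$ is totally unimodular, I would take an arbitrary $k \times k$ submatrix $A[I,J]$ (row set $I$, column set $J$, $|I| = |J| = k$) and aim to show $\det A[I,J] \in \{0,\pm 1\}$. Let $J^c = \{1,\dots,n\} \setminus J$. Using the unit rows from $R$, I would enlarge $A[I,J]$ to an $n \times n$ submatrix of $A$ by adjoining the set $S \subseteq R$ of rows corresponding to the columns in $J^c$, so that $|S| = n-k$. After reordering rows and columns, this $n \times n$ matrix takes the block form
$$
A[I \cup S,\cdot] =
\begin{pmatrix}
A[I,J] & * \\
0 & I_{n-k}
\end{pmatrix},
$$
because each row of $S$ is a unit row supported on $J^c$, so its $J$-part vanishes and its $J^c$-part is a permutation matrix (hence the identity after ordering). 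Consequently $\det A[I,J] = \pm \det A[I \cup S,\cdot]$. Finally, $A[I \cup S,\cdot] = M[I \cup S,\cdot]\,N^{-1}$, so
$$
\det A[I \cup S,\cdot] = \det M[I \cup S,\cdot] \cdot \det N^{-1} = \pm\, \det M[I \cup S,\cdot],
$$
using $\det N = \pm 1$; and since $M[I \cup S,\cdot]$ is an $n \times n$ submatrix of the totally unimodular matrix $M$, its determinant lies in $\{0,\pm 1\}$. Chaining the equalities yields $\det A[I,J] \in \{0,\pm 1\}$.

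The step I expect to require the most care is verifying that the enlargement is legitimate, that is, that $S$ can be chosen disjoint from $I$ so that $I \cup S$ genuinely consists of $n$ distinct rows. The only way this can fail is if some row $r \in I \cap R$ is itself a unit row supported on a column of $J^c$; but in that case $A[r,J]$ is identically zero, so $A[I,J]$ already has a zero row and $\det A[I,J] = 0$, with nothing left to prove. In every remaining case each row of $I \cap R$ is supported on a column of $J$, so $S$ (which indexes columns of $J^c$) is automatically disjoint from $I$ and $|I \cup S| = n$, making the block computation above valid. Handling this overlap dichotomy cleanly is the main, and essentially the only, obstacle; once it is dispatched, the determinant chain closes the argument.
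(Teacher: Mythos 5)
Your proposal is correct, but it cannot be ``the same approach as the paper'' for the simple reason that the paper offers no proof of this lemma at all: it is quoted verbatim from Schrijver (\cite{AS}, Theorem 19.5) and used as a black box. What you have written is a complete, self-contained proof of the quoted result, and it is sound: the identification $A[R,\cdot]=NN^{-1}=I_n$, the enlargement of an arbitrary $k\times k$ submatrix $A[I,J]$ to an $n\times n$ submatrix by adjoining the unit rows indexed by the columns of $J^c$, the block-triangular determinant computation, and the final identity $\det A[I\cup S,\cdot]=\det M[I\cup S,\cdot]\cdot\det N^{-1}=\pm\det M[I\cup S,\cdot]\in\{0,\pm1\}$ all check out. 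You also correctly isolate and dispatch the one genuine subtlety, namely the possible overlap $I\cap S\neq\emptyset$, by observing that in that case $A[I,J]$ already contains a zero row. It is worth noting that your argument is exactly the block-completion device the paper itself uses in the proof of Proposition 1 (enlarging a $t\times t$ submatrix of the standard form to an $n\times n$ one using the identity block), so your write-up effectively shows that Lemma 1 is an elementary consequence of a technique already present in the paper; the only thing the citation buys over your proof is brevity, while your version makes the paper self-contained on this point.
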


The next lemma is a characterisation of totally unimodular matrices
\begin{Lem}[\cite{AS}, Theorem 19.3 (iv)]
Let $M$ be a totally unimodular matrix. Then each collection of rows of $M$ can be split into two parts so that the sum of the rows in one part minus the sum of the rows in the other part is a vector with entries only $0$, $1$ and $-1$.
\end{Lem}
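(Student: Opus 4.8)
The plan is to reduce the partition statement to the fundamental integrality property of totally unimodular matrices. Fix a collection (subset) $R$ of rows of $M$, write $A$ for the submatrix of $M$ consisting of exactly these rows, and let $d = \sum_{i \in R} r_i$ be the sum of all rows in $R$, with $j$-th coordinate $d_j$. Choosing a partition of $R$ into two parts is the same as choosing signs $\epsilon_i \in \{+1,-1\}$ for $i \in R$, and the resulting difference vector has $j$-th coordinate $\sum_{i \in R} \epsilon_i r_{ij}$. Setting $x_i = (1-\epsilon_i)/2 \in \{0,1\}$, one has $\sum_{i \in R} \epsilon_i r_{ij} = d_j - 2\sum_{i \in R} x_i r_{ij} = d_j - 2(A^{T} x)_j$, so the goal becomes: find $x \in \{0,1\}^{R}$ with $|d_j - 2(A^{T} x)_j| \le 1$ for every column $j$.

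First I would set up the right polytope. Define integer bounds $a_j = \lceil (d_j-1)/2 \rceil$ and $b_j = \lfloor (d_j+1)/2 \rfloor$ and consider
$$Q = \{\, x \in \mathbb{R}^{R} : a_j \le (A^{T} x)_j \le b_j \text{ for all } j, \ 0 \le x_i \le 1 \text{ for all } i \,\}.$$
I would check that $Q$ is nonempty by exhibiting the fractional point $x_i = 1/2$: its weighted column sums are $(A^{T} x)_j = d_j/2$, which lies in $[a_j, b_j]$ whether $d_j$ is even (then $a_j = b_j = d_j/2$) or odd (then $a_j < d_j/2 < b_j$).

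The key step is integrality. The constraint matrix of $Q$ is obtained from $A^{T}$ by stacking it with its negative (for the two-sided column constraints) and with an identity block (for the box constraints). Since $A$ is a submatrix of the totally unimodular matrix $M$, it is totally unimodular, hence so is $A^{T}$, and appending negated copies and identity rows preserves total unimodularity; as all right-hand sides $a_j, b_j, 0, 1$ are integers, the standard integrality theorem for totally unimodular systems (Hoffman--Kruskal) guarantees that every vertex of $Q$ is integral. Since $Q$ is a nonempty polytope it has a vertex, giving an integer point $\bar x \in \{0,1\}^{R}$ with $a_j \le (A^{T} \bar x)_j \le b_j$.

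Finally I would read off the partition. With $\epsilon_i = 1 - 2\bar x_i$, the $j$-th entry of the difference vector equals $d_j - 2(A^{T} \bar x)_j$, which lies in $[\,d_j - 2b_j,\ d_j - 2a_j\,]$. A short parity check then finishes the argument: when $d_j$ is even this interval collapses to $\{0\}$, and when $d_j$ is odd it is $[-1,1]$ while the entry itself is odd, hence equal to $\pm 1$. In every case the entry lies in $\{-1,0,1\}$, as required. I expect the main obstacle to be the clean invocation of the integrality theorem: one must be careful that the two-sided, box-constrained system is genuinely governed by a totally unimodular matrix, and that the parity-dependent integer bounds $a_j, b_j$ are chosen so that nonemptiness of $Q$ and the $\{-1,0,1\}$ conclusion hold simultaneously.
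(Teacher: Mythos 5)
Your proposal is correct. Note that the paper does not prove this lemma at all --- it is imported verbatim from Schrijver's book as Theorem 19.3(iv) (one direction of the Ghouila--Houri characterisation) and only illustrated with an example --- and your argument is essentially the standard proof found in that reference: encode the bipartition by a $0/1$-vector, observe that the TU system $a_j \le (A^{T}x)_j \le b_j$, $0 \le x \le 1$ contains the half-integral point $x=\tfrac12\mathbf{1}$, invoke Hoffman--Kruskal integrality to obtain an integral vertex, and finish with the parity check. All steps, including the choice of the rounded bounds $a_j,b_j$ and the final case analysis on the parity of $d_j$, are sound.
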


\begin{Exa}
Consider the rows of the $6\times 4$-totally unimodular matrix below. The sum of the first $4$ rows minus the sum of the rest $2$ rows has entries only $\pm 1$.
$$
\begin{pmatrix}
1 & 1& 1& 1 & -1& -1
\end{pmatrix}
\cdot
\begin{pmatrix}
-1 & 1& 0& 1\\
1 & 0& 1& -1\\
0 & -1& 0& 0\\
1 & -1& 0& 0\\
0 & 0& 1& -1\\
0 & 0& -1& 0
\end{pmatrix}
=
\begin{pmatrix}
1 & -1& 1& 1
\end{pmatrix}
$$
\end{Exa}

The following theorem is one of the main results of this paper:
\begin{The}
Let $P$ be an $n$-dimensional USFP with $m$ vertices, then
\begin{itemize}
\item[(\rmnum{1})] $P$ satisfies the strong Ewald condition, and
\item[(\rmnum{2})] the dual polytope $P^*$ of $P$ satisfies the star Ewald condition.
\end{itemize}
\end{The}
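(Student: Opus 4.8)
The plan is to reduce both statements to the combinatorics of the \emph{symmetric lattice points} $\mathcal{E}(P^*)$, exploiting the standard form and its total unimodularity (Proposition 1). The recurring device is: given a vertex or face of $P$ that I want to control, I first put $P$ into the standard form with respect to a facet $G$ of $P$ containing it, so that the relevant vertices become standard basis vectors $e_1,\dots,e_n$ and the resulting coordinate matrix $M'$ is totally unimodular with all entries in $\{0,\pm1\}$. In particular $P\subseteq[-1,1]^n$ and each $\pm e_j\in\mathcal{E}(P^*)$, since $M'e_j$ is a column of $M'$. Throughout I use that for $\lambda\in\mathcal{E}(P^*)$ and a vertex $v$ of $P$ the value $\langle\lambda,v\rangle$ lies in $\{0,\pm1\}$ (it is an integer in $[-1,1]$ because $\pm\lambda\in P^*$).

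For (\rmnum{1}) I use the facet form of the strong Ewald condition. Let $v$ be any vertex of $P$; choose $G\supseteq v$ and pass to the standard form so that $v\mapsto e_1$. I will exhibit a unimodular basis $\ell_1,\dots,\ell_n$ of symmetric points with $\langle\ell_i,v\rangle=1$ for all $i$; then $\{-\ell_i\}\subseteq\mathcal{E}(P^*)\cap F_v$ is the desired basis on the facet $F_v$ of $P^*$ dual to $v$ (equivalently, $x\mapsto(\langle\ell_i,x\rangle)_i$ sends $v$ to $\sum_i e_i$ and $P$ into $[-1,1]^n$). I take $\ell_1=e_1$ and $\ell_i=e_1+s_i e_i$ for $i\ge 2$, which automatically give $\langle\ell_i,e_1\rangle=1$ and form a unimodular basis for any signs $s_i\in\{\pm1\}$. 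The only thing to arrange is $\ell_i\in\mathcal{E}(P^*)$, i.e.\ that $M'\ell_i=\mathrm{col}_1(M')+s_i\,\mathrm{col}_i(M')$ has entries in $\{0,\pm1\}$. This is exactly Lemma 2 applied to the pair of columns $\mathrm{col}_1,\mathrm{col}_i$ of the totally unimodular $M'$ (columns of $M'$ are rows of the totally unimodular $(M')^{\mathsf T}$): the lemma furnishes a sign $s_i\in\{\pm1\}$ making $\mathrm{col}_1(M')+s_i\,\mathrm{col}_i(M')$ a $\{0,\pm1\}$-vector.

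For (\rmnum{2}) the first task is a dictionary. Under polar duality the facets of $P^*$ containing a proper nonempty face $f$ correspond to the vertices $v_1,\dots,v_k$ of the dual face $g:=f^{\diamond}$ of $P$, and the ridges of $P^*$ containing $f$ correspond to the edges of $g$; since $P$ is simplicial, $g$ is a simplex, so every pair $\{v_i,v_j\}$ is such an edge. For symmetric $\lambda$ set $a_i:=\langle\lambda,v_i\rangle\in\{0,\pm1\}$: then $\lambda$ lies on the facet dual to $v_i$ iff $a_i=-1$, on the ridge dual to $\{v_i,v_j\}$ iff $a_i=a_j=-1$, and $-\lambda$ lies on the facet dual to $v_i$ iff $a_i=+1$. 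Translating the definition, the requirement $\lambda\in\mathrm{Star}^*(f)$ together with $-\lambda\notin\mathrm{Star}(f)$ becomes: exactly one $a_i$ equals $-1$ and all others equal $0$. Hence star Ewald for $f$ reduces to finding a symmetric point pairing to $-1$ with one vertex of $g$ and to $0$ with the rest. Choosing $G\supseteq g$ and passing to the standard form so that $v_1,\dots,v_k\mapsto e_1,\dots,e_k$, the point $\lambda=-e_1$ works: $\langle-e_1,e_1\rangle=-1$, $\langle-e_1,e_j\rangle=0$ for $2\le j\le k$, and $-e_1\in\mathcal{E}(P^*)$ because $-\mathrm{col}_1(M')\in\{0,\pm1\}^m$ by total unimodularity.

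I expect the main obstacle to be conceptual rather than computational: setting up the polar-duality dictionary and, above all, carrying out the combinatorial translation of the star Ewald condition into the clean arithmetic statement ``exactly one pairing is $-1$, the rest are $0$''. The delicate points are that $\mathrm{Star}^*(f)=\mathrm{Star}(f)\setminus\mathrm{star}(f)$ forces \emph{at most} one vertex-pairing to equal $-1$ (else $\lambda$ meets a ridge), while $-\lambda\notin\mathrm{Star}(f)$ forbids any pairing equal to $+1$; and that simpliciality of $P$ is needed to know that every pair of vertices of $g$ is genuinely an edge, so that $\mathrm{star}(f)$ is indexed by all such pairs. The extreme faces (the empty face and $P^*$ itself) fall outside the simplex reduction and should be excluded from, or treated separately in, the statement. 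In part (\rmnum{1}) the only substantive step is the use of Lemma 2 to stay inside the cube while forcing the all-ones image; everything else is bookkeeping once $v$ has been moved to $e_1$.
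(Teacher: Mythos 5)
Your proof is correct, and part (\rmnum{1}) is essentially the paper's argument in dual form: the paper performs the column operations $c_j \mapsto c_j \pm c_1$ explicitly and rules out entries $\pm 2$ via a forbidden $2\times 2$ submatrix, while you package the same step as an application of Lemma 2 to the pair of columns $\{c_1, c_j\}$ of the totally unimodular standard form; the two are interchangeable. In part (\rmnum{2}) you carry out the same reduction as the paper --- star Ewald for a codimension-$d$ face $f$ of $P^*$ becomes: find $\lambda \in \mathcal{E}(P^*)$ with $\langle \lambda, v_1\rangle = -1$ and $\langle \lambda, v_i\rangle = 0$ for the remaining vertices $v_2,\dots,v_d$ of the dual face $g$ of $P$ --- but your endgame is genuinely simpler. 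The paper completes $v_1,\dots,v_d$ to the vertex set of a facet of $P$, forms the unimodular matrix $N$, writes $\lambda^T = N^{-1}(-1,0,\dots,0,s_{d+1},\dots,s_n)^T$, and invokes Lemma 2 to find signs $s_i \in \{\pm 1\}$ keeping $M\lambda^T \in \{-1,0,1\}^m$; you observe that the choice $s_{d+1} = \dots = s_n = 0$ already works, since then $M\lambda^T$ is simply the negated first column of the totally unimodular matrix $MN^{-1}$ --- in your coordinates, $\lambda = -e_1$ in the standard form adapted to a facet $G \supseteq g$. So Lemma 2 is needed only for part (\rmnum{1}), not for part (\rmnum{2}), which is a worthwhile streamlining. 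Your cautionary remarks are also on target: simpliciality of $P$ (simplicity of $P^*$) is exactly what guarantees that precisely $d$ facets of $P^*$ contain $f$ and that every pair of vertices of the simplex $g$ is dual to a ridge, and the paper likewise tacitly restricts to proper nonempty faces.
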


\begin{proof} Let $M$ be the standard form of the (coordinate) matrix of $P$. Then $M$ is totally unimodular by Lemma $1$.
\\
\\
$(\rmnum{1})$ Consider any row of $M$ which represents the vertex $v$. Since the interchange of any two columns or rows preserves the totally unimodularity, then w.l.o.g, one may assume that it is the first row and its first coordinate $a_{11}$ equals to $1$. Then let $a^{\prime}_{i1}:=a_{i1}, \forall i \in \{1,\dots,m\}$ and calculate $a_{ij}$ with any fixed $j \geq 2$ as follows:
\begin{itemize}
\item [(a)] if $a_{1j} \neq 0$, do nothing; 
\item [(b)] if $a_{1j} = 0$, then let $b_{ij}= a_{ij}-a_{i1}$ and $c_{ij}= a_{ij}+a_{i1}$, for each $i \in \{1,\dots,m\}$;
\item [(c)] if $\vert b_{ij} \vert \leq 1$, then let $a^{\prime}_{ij}:=b_{ij}$. Otherwise, let $a^{\prime}_{ij}:=c_{ij}$.
\end{itemize}
For any fixed $j\geq 2$, we claim that $\vert a^{\prime}_{ij} \vert \leq 1$ for any $i$. Since all entries of totally unimodular matrix $M$ are in $\{-1,0,1\}$, the values of these new entries $a^{\prime}_{ij}$ may only be $\pm 1$, $\pm 2$ or $0$. Assume that the claim is false. Then we may assume that $a'_{i_1j}:=b_{i_1{j}}= -2$ and $a'_{i_2j}:=c_{i_2{j}}=2$, for some $i_1, i_2 \in \{1,\dots,m\}$. So in $M$, the relevant entries are $a_{i_1{1}}=1, \ a_{i_1{j}}=-1$ and $a_{i_2{1}}=1, \ a_{i_2{j}}=1$, which form a $2 \times 2$-submatrix
$$
\begin{pmatrix}
1 & -1 \\
1 & 1
\end{pmatrix}
$$
This obviously contradicts to the totally unimodularity.
\newline
\\
By computing all $j \geq 2$, we obtain a new matrix $M^{\prime}:=(a^{\prime}_{ij})$. Multiply $-1$ to the columns whose first entry is equal to $-1$ and denote the new matrix again as $M^{\prime}$. Then the first row of $M^{\prime}$ becomes $(1,\dots,1)$.
\\
Let $g$ be the linear map whose matrix $T$ represents the elementary operations above. Then it is unimodular as a multiplication of unimodular matrices. Denote the new polytope of the matrix $M^{\prime}$ as $P^{\prime}$. Now, $g$ is indeed such a unimodular transformation that for any vertex $v$ of $P$, we have $g(P)=P^{\prime}$, $M \cdot T = M^{\prime}$ and $g(v)=\sum_{t=1}^n {e_t}$. So $P$ satisfies the strong Ewald condition.
\\
\\
$(\rmnum{2})$ Let $f$ be a given face of $P^*$ with codimension $d$. Since $P$ is simplicial, $P^*$ is simple and then there are precisely $d$ facets s.t. $f=\bigcap_{i=1}^d F_i$, where $f\subset F_i \in \mathcal{F}(P')$ and $\mathcal{F}(P')$ is the set of all facets of $P^*$. Then it's obvious that:
\begin{itemize}
\item[(1)] $\mathrm{Star}(f)=\bigcup_{i=1}^d F_i$
\item[(2)] $\mathrm{star}(f)=\bigcup_{1 \leq i \neq j \leq d} (F_i \cap F_j)$
\end{itemize}
Let $\lambda$ be a symmetric point of $P^*$, then $-\lambda,\lambda \in P^*$ implies that $-1\leq \langle \lambda, u_i \rangle \leq 1$, where $u_i$ are normals of all the facets of $P^*$. Besides, these $u_i$ are vertices of $P$ and their coordinates are rows of $M$. In other words, we have $M \cdot \lambda^T \in {\{-1,0,1\}}^m$.
\\
\\
Now, our target is to find a specific $\lambda \in \mathcal{E}(P)$ satisfying the following requirements:
\begin{itemize}
\item[(1)] $\lambda \in \mathrm{Star}^{*}(f)$, i.e., among those facets $F_i$ containing $f$, the point $\lambda$ lies in exactly one of them. 
So, w.l.o.g., let $\lambda$ lie in $F_1$, then
$$\langle \lambda,u_1 \rangle =-1$$
and 
$$\langle \lambda,u_i \rangle \geq 0$$
where $2 \leq i \leq d$.
\item[(2)] $-\lambda \notin \mathrm{Star}(f)$, i.e., for all facets $F_i$ containing $f$, $-\lambda$ lies in none of them. So $\langle -\lambda,u_i \rangle \geq 0$, for all $i\in \{1,\dots,d\}$, i.e.
$$\langle \lambda,u_i \rangle \leq 0$$

\end{itemize}
As the consequence of (1) and (2), we obtain that
$$\langle \lambda,u_1 \rangle = -1, \ \langle \lambda,u_i \rangle =0, \ \forall i \in \{2,\dots,d\}$$
which can also be written as $U\cdot \lambda^T = -e_1^T$, where $U$ consists of rows $u_i$, and $e_1$ is the first unit basis vector of dimension $d$.
\\
\\
Since $P^*$ is simple, we can pick one vertex $v$ of $f$ and then $v=f\cap(\cap F_j)$, for some other facets $F_j \neq F_i$ with $j\in \{d+1,\dots,n\}$. Take the corresponding normals $u_i$, $i\in \{d+1,\dots,n\}$ and denote the matrix consists of $U$ and rows $u_i$, $i\in \{d+1,\dots,n\}$ as $N$. By the constructions, we know $u_i$, $i\in \{1,\dots,d\}$ are vertices of a facet and $P$ is smooth. Thus, the matrix $N$ is unimodular, which means $N^{-1}$ does exist. Moreover, by Lemma 1, $M \cdot N^{-1}$ is totally unimodular. Let $s_i=u_i \cdot \lambda^T, \forall i \in \{d+1,\dots,n\}$, then:
$$
N\cdot \lambda^T =
\begin{pmatrix}
-1 \\
0 \\
\vdots \\
0 \\
s_{d+1} \\
\vdots \\
s_n
\end{pmatrix}
\Longrightarrow
\lambda^T = N^{-1} \cdot
\begin{pmatrix}
-1 \\
0 \\
\vdots \\
0 \\
s_{d+1} \\
\vdots \\
s_n
\end{pmatrix}
$$  
Notice that $M\cdot \lambda^T \in {\{-1,0,1\}}^m$. So we must have $s_i \in \{-1,0,1\},\forall i\in\{d+1,\dots,n\}$. Then finding a specific $\lambda$ is the same as finding the concrete $s_i$ s,t.
$$
M\cdot \lambda^T = M\cdot N^{-1} \cdot 
\begin{pmatrix}
-1 \\
0 \\
\vdots \\
0 \\
s_{d+1} \\
\vdots \\
s_n
\end{pmatrix}
\in {\{-1,0,1\}}^m
$$
Now, we consider the collection of columns $\{1,d+1,\dots,n\}$ of $M\cdot N^{-1}$ and name these columns as $c_1,c_{d+1},\dots,c_n$, respectively. By Lemma $2$, we may find coefficients $a_1,a_{d+1},\dots,a_n \in \{-1,1\}$ with $a_1c_1 + a_{d+1}c_{d+1} + \cdots +a_nc_n \in {\{-1,0,1\}}^m$. If $a_0=-1$, then let $s_i=a_i$, otherwise, $s_i=-a_i$, for all $i \in \{d+1,\dots,n\}$. Then $\lambda^T = N^{-1} \cdot {(-1,0,\dots,0,s_{d+1},\dots,s_n)}^T$ is exactly the symmetric point we want. One can easily check that this $\lambda$ satisfies 
\begin{equation}
\left\{
\begin{aligned}
M\cdot \lambda^T & \in {\{-1,0,1\}}^m \\
U\cdot \lambda^T & = -e_1^T
\end{aligned}
\right.
\end{equation}
Therefore, $f$ satisfies the star Ewald condition. Since $f$ is an arbitrary face of the polytope $P^*$, it indeed satisfies the star Ewald condition.
\end{proof}

\subsection{Regular matroids and Seymour's decomposition theorem}

\begin{Def}
A matroid $D$ \emph{is an ordered pair $(E,\mathcal{I})$ consisting of a finite set $E$ and a collection $\mathcal{I}$ of subsets of $E$ satisfying the following three conditions:}
\begin{itemize}
\item[(C1)] $\emptyset \in \mathcal{I}$
\item[(C2)] If $I \in \mathcal{I}$ and $I' \subset I$, then $I' \in \mathcal{I}$
\item[(C3)] If $I_1$ and $I_2$ are in $\mathcal{I}$ and $\vert I_1 \vert < \vert I_2 \vert$, then there is an element $e$ of $I_2 - I_1$ such that $I_1 \cup e \in \mathcal{I}$
\end{itemize}
\end{Def}

If $D$ is the matroid $(E,\mathcal{I})$, then $D$ is called a matroid on $E$ with \textit{independent set} $\mathcal{I}$ and \textit{ground set} $E$. The \textit{rank} of the matroid $D$ is defined by the cardinality of the maximal subset of $E$ in $\mathcal{I}$. $N$ is called a \emph{minor} of $D$, if $N$ is obtained from $D$ by a sequence of contractions or deletions (\cite{OX}, Section $3.1$) or both. Let $A$ be a matrix over a fixed field $\mathbb{F}$. Take the set of its rows as the ground set and the collection of possible subsets as the independent set. Then $D(A)$ can be seen as a matroid induced by a matrix. If $D$ is isomorphic to $D(A)$, for some matrix $A$ over $\mathbb{F}$, then $D$ is said to be \emph{representable} over $\mathbb{F}$. A matroid is called \emph{regular} if it is representable over any field. Then we have the lemma:
\begin{Lem}[\cite{OX}, Lemma $2.2.21$] 
A matroid $D$ of non-zero rank $n$ is regular if and only if $D$ can be represented over $\mathbb{R}$ by some totally unimodular matrix of the form ${[I_r\vert A]}^T$, which is called the standard representative matrix for $D$.
\end{Lem}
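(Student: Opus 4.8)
The plan is to prove the two implications separately, after first reducing every representation to standard form. For the standard-form reduction I would begin from an arbitrary representation of $D$ over a field $\mathbb{F}$ and apply invertible operations on the coordinate space together with a relabelling of the ground set. Such operations change neither the linear dependencies among the ground-set vectors nor the rank function, so they preserve the matroid $D$, while bringing a chosen basis to the identity block and producing a matrix of the shape $[I_r \mid A]^T$. This simultaneously justifies the name ``standard representative matrix'' and reduces the lemma to the assertion: a standard-form real matrix is totally unimodular if and only if the matroid it represents is regular.

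For the easy implication ($\Leftarrow$), suppose $M = [I_r \mid A]^T$ is totally unimodular. I would show that $M$ represents the same matroid over every field. Independence of a set of ground-set vectors is detected by the rank of the corresponding submatrix, and over any field the rank equals the size of the largest nonvanishing minor. Let $\mathbb{F}$ be arbitrary and let $\pi : \mathbb{Z} \to \mathbb{F}$ be the canonical ring homomorphism; since $\det$ commutes with $\pi$, each integer minor $d$ of $M$ maps to $\pi(d)$. Total unimodularity forces $d \in \{-1,0,1\}$, and $\pi$ sends $0 \mapsto 0$ and $\pm 1 \mapsto \pm 1 \neq 0$. Hence a minor vanishes over $\mathbb{R}$ exactly when it vanishes over $\mathbb{F}$, so the rank function, and therefore the matroid, agrees over $\mathbb{R}$ and over $\mathbb{F}$. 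As $\mathbb{F}$ was arbitrary, $D$ is representable over every field, i.e. regular. It is precisely here that total unimodularity is indispensable: a generic integer minor equal to some prime $p$ would vanish modulo $p$ and so alter the matroid.

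For the hard implication ($\Rightarrow$), suppose $D$ is regular. Then $D$ is in particular representable over $\mathrm{GF}(2)$ and over $\mathrm{GF}(3)$, and I would fix standard representations $[I_r \mid A_2]$ and $[I_r \mid A_3]$ over these two fields. The support of the off-diagonal block is determined by the matroid alone, since an entry is nonzero precisely when the corresponding fundamental swap of a ground-set element for an identity element yields a basis; thus $A_2$ and $A_3$ may be taken with the same zero pattern. The strategy is then to lift this to an integer matrix $[I_r \mid \tilde A]$ whose nonzero entries are $\pm 1$, chosen consistently with the $\mathrm{GF}(3)$ signs, and to prove that the resulting real matrix is totally unimodular and still represents $D$. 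Establishing total unimodularity amounts to showing that the support pattern admits a global sign assignment under which every subdeterminant lands in $\{-1,0,1\}$; one argues inductively that the singularity pattern, forced to agree simultaneously modulo $2$ and modulo $3$, pins each subdeterminant to $\{-1,0,1\}$.

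The main obstacle is exactly this last step: proving that a consistent global signing exists and that agreement over $\mathrm{GF}(2)$ and $\mathrm{GF}(3)$ genuinely forces total unimodularity. This is the substantive content of Tutte's regular-matroid theorem (equivalently, Camion's signing theorem), which guarantees both the existence and the essential uniqueness of the signing. I would invoke this deep result from \cite{OX} rather than reprove it; granting it, the lifted matrix is totally unimodular by construction, its reductions modulo $2$ and $3$ recover $A_2$ and $A_3$, and hence it represents $D$ over $\mathbb{R}$, which completes the equivalence.
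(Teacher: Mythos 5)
The paper does not prove this lemma at all: it is quoted verbatim from Oxley (\cite{OX}, Lemma 2.2.21) as a black box, so there is no in-paper argument to compare yours against. Judged on its own, your outline is the standard textbook proof and is essentially sound. The reduction to standard form via row operations and column scaling is correct, and your backward direction is complete: since the determinant is an integer polynomial in the entries it commutes with the canonical homomorphism $\mathbb{Z}\to\mathbb{F}$, total unimodularity forces every minor into $\{-1,0,1\}$, and $\pm1$ stays nonzero in every field, so the rank function and hence the matroid is field-independent. For the forward direction you correctly identify that the real content is Tutte's characterization (equivalently Camion's signing theorem) and defer to it; given that the lemma itself is a citation, outsourcing this half to the same reference is legitimate. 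The one soft spot is the paragraph where you gesture at an inductive argument that agreement of singularity patterns over $\mathrm{GF}(2)$ and $\mathrm{GF}(3)$ ``pins each subdeterminant to $\{-1,0,1\}$'' --- as written this is not a proof and is not quite how the signing argument runs, but since you immediately replace it with an explicit appeal to the theorem, it does not constitute a gap in the logic, only in self-containedness.
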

\begin{Rem}
Let $P$ be a USFP and $M$ be its coordinate matrix, then $M$ can be seen as a representative matrix of a regular matroid. Indeed, this is because each row of $M$ gives the coordinates of a vertex of $P$, while the rows in the matrix of a matroid represent the elements of its ground set. 
\end{Rem}
Anyway, for regular matroids, it could be more convenient to deal with its corresponding totally unimodular matrix. 

\begin{Def} (operations that preserve totally unimodularity)
\begin{itemize}
\item[1.]\emph{(Dualizing) Let ${[I_{n \times n} \vert A_{n \times r}]}^T$ be a standard representative matrix of a matroid $D$, then its} (matroid) dual \emph{is defined by a representative matrix ${[-A^T_{r \times n} \vert I_{r \times r}]}^T$. (Here the name `matroid' dual is used to distinguish the `polar' dual of a polytope);}
\item[2.]\textup{($k$-sum) Let $A_1, A_2$ be the matrices of two matroids $D_1, D_2$, respectively. Then the} $k$-sum \textit{($k=1,2,3$) of $A_1$ and $A_2$ can be defined as follows:}
\par 
\textbf{1-sum}: 
$$
A_1 \oplus_1 A_2=
\begin{pmatrix}
A_1 & 0 \\
0 & A_2
\end{pmatrix} 
$$
\par
\textbf{2-sum}:
$$
\begin{pmatrix}
A_1 & a_1 \\
0 & 1
\end{pmatrix}
\oplus_2
\begin{pmatrix}
1 & 0 \\
a_2 & A_2
\end{pmatrix}
=
\begin{pmatrix}
A_1 & a_1 & 0\\
0 & a_2 & A_2\\
\end{pmatrix}
$$
\par
\textbf{3-sum}:
$$
\begin{pmatrix}
A_1 & a_1 & b_1 \\
0 & 1 & 1 \\
0 & 1 & 0 \\
0 & 0 &1
\end{pmatrix}
\oplus_3
\begin{pmatrix}
1 & 1 & 0 \\
1 & 0 & 0 \\
0 & 1 & 0 \\
a_2 & b_2 & A_2 
\end{pmatrix}
=
\begin{pmatrix}
A_1 & a_1 & b_1 & 0\\
0 & a_2 & b_2 & A_2
\end{pmatrix}
$$
\end{itemize}
\textup{where $a_1,a_2,b_1,b_2$ are column vectors and these matrices in the $k$-sums are all totally unimodular.}
\end{Def}
For more details and interesting topics in matroid, we refer the readers to books \cite{OX} by J. Oxley and \cite{Tru} by K. Truemper.

\begin{Def}
\textup{A matrix is called \emph{graphic} if it (or its transpose, respectively) contains at least one and at most two non-zero entries in each column (or row, respectively) and once there are precisely two non-zero entries, then one is $1$ and the other is $-1$. The (matroid) dual of a graphic matrix is said to be \emph{cographic}. 
In addition, let $G=(V,E)$ be an undirected graph. Then the graphic matroid is defined to be a matroid $D=(E,\mathcal{I})$, where $\mathcal{I}=\{F\subset E \ \vert \ F \ is \ acyclic \}$, i.e. forests in $G$} \emph{\cite{Tru}.} 
\end{Def}

\begin{Rem}
A matrix is said to be \emph{planar} if it's both graphic and cographic. Otherwise, it's called non-planar. The $k$-sum ($k=1,2,3$) of two graphic (cographic, respectively) matrix is again graphic (cographic, respectively). But the $k$-sum of a graphic matrix and a cographic matrix is neither graphic nor cographic if they are both not planar.
\end{Rem}

The following theorem is a very famous result for the charaterisation of regular matroids:

\begin{The} [Seymour's decomposition theorem, \cite{Seymour}]
Every regular matroid $D$ can be constructed by means of 1-, 2-, and 3-sums, starting with matroids each isomorphic to a minor of $D$ and each either graphic or cographic or isomorphic to $R_{10}$.
\end{The}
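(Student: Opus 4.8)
The statement is a deep, self-contained result in matroid theory, and the plan is to reproduce the high-level architecture of Seymour's original argument rather than to invent a new route. The proof goes by induction on $|E(D)|$ and is organised entirely around the connectivity of $D$. The guiding principle is that the three admissible building blocks -- graphic, cographic, and $R_{10}$ -- should be exactly the matroids one cannot decompose any further, so the whole problem reduces to showing that any regular matroid which is none of these admits a nontrivial $1$-, $2$-, or $3$-sum splitting into strictly smaller regular matroids, each a minor of $D$. Because regularity passes to minors and each side of a $k$-sum is isomorphic to a proper minor of $D$, the inductive hypothesis then decomposes the two pieces, and one more $k$-sum reassembles $D$; conversely $k$-sums of regular matroids are again regular, so nothing ever leaves the class.

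First I would dispose of low connectivity. If $D$ is disconnected it is the $1$-sum of its connected components, each a proper minor, and induction applies immediately. If $D$ is connected but admits a $2$-separation, the Cunningham--Edmonds theory of $2$-separations exhibits $D$ as a $2$-sum $D_1 \oplus_2 D_2$ of two proper minors, and again we finish by induction. Hence we may assume $D$ is $3$-connected.

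Next I would isolate the exceptional piece $R_{10}$. Using Seymour's Splitter Theorem one shows that $R_{10}$ is a splitter for the class of regular matroids: every $3$-connected regular matroid carrying an $R_{10}$-minor is isomorphic to $R_{10}$ itself. This settles every $3$-connected $D$ with an $R_{10}$-minor. For a $3$-connected regular $D$ with no $R_{10}$-minor, if $D$ happens to be graphic or cographic it is already a permitted building block; so the entire content of the theorem collapses to the single case in which $D$ is $3$-connected, regular, has no $R_{10}$-minor, and is neither graphic nor cographic.

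This last case is where essentially all of the difficulty lives, and it is the step I expect to be the main obstacle. The goal is to manufacture a nontrivial $3$-sum $D = D_1 \oplus_3 D_2$ with both $D_i$ proper minors. The mechanism is the auxiliary $12$-element regular matroid $R_{12}$, which carries a canonical exact $3$-separation: one first shows that a $3$-connected, non-graphic, non-cographic regular matroid without an $R_{10}$-minor must contain an $R_{12}$-minor, and then that the exact $3$-separation of $R_{12}$ can be ``lifted'' to an induced $3$-separation of $D$ realising it as a $3$-sum. Carrying out this lifting -- guaranteeing that the separation stays exact, that both sides have the connectivity needed to be legitimate $3$-sum factors, and that each is a genuine proper minor -- is precisely the delicate internal-$4$-connectivity analysis that occupies the bulk of Seymour's paper. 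Once the $3$-sum is secured, induction on $D_1$ and $D_2$ completes the proof.
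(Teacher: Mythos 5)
The paper does not prove this statement at all: it is quoted verbatim as a known external result, with the proof deferred entirely to Seymour's 1980 paper \cite{Seymour}. So there is no internal argument to compare yours against.

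Taken on its own terms, your write-up is an accurate roadmap of Seymour's actual proof --- induction on the ground set, reduction through $1$- and $2$-separations, the splitter theorem isolating $R_{10}$, and the $R_{12}$-minor argument producing the $3$-sum --- but it is a roadmap rather than a proof. Each of the three steps you flag as ``the hard part'' is itself a major theorem that you invoke without proving: (i) the Splitter Theorem and the verification that $R_{10}$ is a splitter for regular matroids; (ii) the claim that every $3$-connected regular matroid that is neither graphic nor cographic and has no $R_{10}$-minor must have an $R_{12}$-minor; and (iii) the claim that the exact $3$-separation of $R_{12}$ induces a $3$-separation of $D$ that can be realised as a $3$-sum of two proper minors. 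Item (ii) in particular rests on the characterisation of graphic and cographic regular matroids via excluded minors (the $M^*(K_5)$, $M^*(K_{3,3})$ analysis the paper cites separately as Lemma~4), and item (iii) requires the delicate ``blocking sequence'' analysis you allude to. Since the entire mathematical content of the theorem lives in these three steps, the proposal should be regarded as a correct plan of attack rather than a proof; for the purposes of this paper, citing \cite{Seymour} as the source, as the author does, is the appropriate treatment.
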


The matroid $R_{10}$ has representative matrix 
$$
m(R_{10}) = 
\begin{pmatrix}
1 & 0& 0& 0& 0 \\
0 & 1& 0& 0& 0\\
0 & 0& 1& 0& 0\\
0 & 0& 0& 1& 0\\
0 & 0& 0& 0& 1\\
-1& 1& 0& 0& 1\\
1& -1& 1& 0& 0\\
0& 1& -1& 1& 0\\
0& 0& 1& -1& 1\\
1& 0& 0& 1& -1\\
\end{pmatrix}
$$

In the language of matrices, it can be stated as:
\begin{The}
Every TU  matrix can be obtained from graphic matrices and $m(R_{10})$ by elementary operations of matrices, dualizing and $k$-sums for $k=1,2,3$.
\end{The}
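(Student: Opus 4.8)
Since the statement is nothing but Seymour's decomposition theorem transported from regular matroids to totally unimodular (TU) matrices, the plan is to build a faithful dictionary between the two settings and then carry the matroid decomposition across it. First I would associate to the given TU matrix $M$ the matroid $D=D(M)$ whose ground set is indexed by the rows of $M$. After reducing $M$ to a standard representative form $[I_r \mid A]^T$ by elementary operations, which preserve total unimodularity, the characterisation in Lemma 3 shows that $D(M)$ is a regular matroid and that $M$ is one of its TU representatives. Thus every TU matrix arises as a representative of some regular matroid, and this is the object to which Seymour's theorem applies.

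Next I would apply Seymour's decomposition theorem to $D$. This yields a construction of $D$ by iterated matroid $1$-, $2$- and $3$-sums whose building blocks are minors of $D$, each graphic, cographic, or isomorphic to $R_{10}$. The core of the argument is then to realise each ingredient of this matroid-level construction by the matching matrix-level operation of Definition 6. A graphic block is represented by a graphic matrix (the reduced incidence matrix of its underlying graph); a cographic block, being the matroid dual of a graphic matroid, is represented by applying the dualizing operation to a graphic matrix, hence produces a cographic matrix; and the block isomorphic to $R_{10}$ is represented by $m(R_{10})$. Running the same tree of sums with the matrix $k$-sums of Definition 6 in place of the matroid $k$-sums then produces a TU matrix $M'$ that represents $D$ and is assembled exactly from graphic matrices and $m(R_{10})$ by dualizing and $k$-sums. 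The recursion terminates because each $k$-sum splits $D$ into strictly smaller matroids, and the base cases are precisely the blocks already graphic, cographic, or $R_{10}$.

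It remains to pass from a decomposition of the matroid $D$ to a decomposition of the specific matrix $M$, and this is where the two genuine obstacles lie. The first is verifying that the matrix $k$-sums of Definition 6 faithfully represent the matroid $k$-sums, that is, that whenever $M_1$ and $M_2$ represent $D_1$ and $D_2$ the block matrix $M_1 \oplus_k M_2$ represents the matroid $k$-sum $D_1 \oplus_k D_2$; this demands careful bookkeeping of the connecting marker elements and of the prescribed block patterns in each of the three sum types, and I expect it to be the most delicate step. The second is the fact that a regular matroid is uniquely representable over $\mathbb{R}$ up to total unimodularity, so that any two TU representatives of $D$ differ only by a sequence of elementary operations (permutations of rows and columns, sign changes, and pivots); in particular $M$ and the assembled matrix $M'$ are related in this way.

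Granting these two facts the conclusion follows at once: $M$ is obtained from $M'$ by elementary operations, while $M'$ is built from graphic matrices and $m(R_{10})$ by dualizing and $k$-sums, which is exactly the asserted decomposition. I would organise the whole argument as an induction on the number of ground-set elements of $D$, with the single-block cases handled directly and the inductive step supplied by the $k$-sum dictionary above.
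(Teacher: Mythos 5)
Your proposal is correct and follows the only natural route: the paper itself gives no proof of this theorem, presenting it simply as the matrix-language restatement of Seymour's decomposition theorem (Theorem 2), so there is nothing to diverge from. Your sketch supplies exactly the dictionary such a restatement tacitly relies on, and you correctly isolate the two points that genuinely need checking --- that the matrix $k$-sums of Definition 6 represent the matroid $k$-sums, and that regular matroids are uniquely representable by totally unimodular matrices up to the listed elementary operations (the Brylawski--Lucas unique-representability theorem); both are standard and can be found in Schrijver's or Truemper's treatment.
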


Next, we are going to introduce another interesting type of smooth Fano polytopes, which arises from directed graphs \cite{Hig}.
\begin{Def}
\textup{Let $G=(V(G),A(G))$ be a directed graph and $e_1,\dots,e_d$ be the standard basis of $\mathbb{R}^d$. For each arrow $a=(i,j)$ of $G$, we define $\rho(a)\in \mathbb{R}^d$ by setting $\rho(a)=e_i-e_j$. Furthermore, the convex hull of $\{\rho(a)\ \vert \ e\in A(G)\}$ is denoted by $\mathcal{P}(G)$.}
\end{Def}
$\mathcal{P}(G)$ is obviously a polytope by definition.
\begin{Prop}[\cite{Hig}]
Let $\mathcal{P}(G)$ be a polytope arising from a directed graph $G$, then it is a Fano polytope of dimension $d-1$ if every arrow of $G$ lies in a directed cycle. 
\end{Prop}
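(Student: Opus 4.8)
The plan is to check, one at a time, the three properties that make $\mathcal{P}(G)$ a Fano polytope of dimension $d-1$: full dimension $d-1$, the origin in the interior, and primitivity of the vertices. The single observation driving everything is that each generator lies on the hyperplane $H:=\{x\in\mathbb{R}^d:\sum_{t=1}^d x_t=0\}$, because $\rho(a)=e_i-e_j$ has coordinate sum $0$. Thus $\mathcal{P}(G)\subseteq H$ and $\dim\mathcal{P}(G)\le d-1$ at once. The intersection $H\cap\mathbb{Z}^d$ is the root lattice of type $A_{d-1}$, isomorphic to $\mathbb{Z}^{d-1}$; identifying $H$ with $\mathbb{R}^{d-1}$ through this lattice turns $\mathcal{P}(G)$ into a lattice polytope to which Definition 1 applies, and a vector primitive in $\mathbb{Z}^d$ stays primitive in the sublattice $H\cap\mathbb{Z}^d$.

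First I would dispose of primitivity, the easiest item. Every vertex of $\mathcal{P}(G)$ is an extreme point of the finite generating set, hence equals some $\rho(a)=e_i-e_j$ with $i\neq j$; its entries lie in $\{-1,0,1\}$ and have greatest common divisor $1$, so it is primitive. (A loop would contribute $\rho(a)=0$, which is harmless, since once the origin is interior it cannot be a vertex.)

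The heart of the matter is turning the hypothesis ``every arrow lies in a directed cycle'' into the statement $0\in\operatorname{relint}\mathcal{P}(G)$. The basic identity is that a directed cycle $C=(v_1\to v_2\to\cdots\to v_k\to v_1)$ telescopes: $\sum_{a\in C}\rho(a)=\sum_{t}(e_{v_t}-e_{v_{t+1}})=0$. A single cycle only places $0$ in $\mathcal{P}(G)$; to reach the interior I would pick, for each arrow $b$, a directed cycle $C_b\ni b$ and add all the corresponding relations,
$$0=\sum_{b\in A(G)}\;\sum_{a\in C_b}\rho(a)=\sum_{a\in A(G)}m_a\,\rho(a),\qquad m_a:=\#\{\,b:a\in C_b\,\}\ge 1,$$
where $m_a\ge 1$ because $a\in C_a$. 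Normalizing by $\sum_a m_a$ exhibits the origin as a convex combination of the generators in which every generator receives a strictly positive weight. By the defining property of faces — a face containing a strictly positive convex combination of points of the polytope must contain all of them — the smallest face of $\mathcal{P}(G)$ containing $0$ is $\mathcal{P}(G)$ itself, i.e. $0\in\operatorname{relint}\mathcal{P}(G)$.

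It remains to upgrade ``$\dim\le d-1$'' to equality, which also promotes the relative interior to the genuine interior (the projective condition). Since $0\in\operatorname{relint}\mathcal{P}(G)$, the affine hull of $\mathcal{P}(G)$ is the linear span of $\{\rho(a)\}$, so I only need this span to be all of $H$. This is where connectivity of $G$ enters — the standing assumption that makes the ambient dimension equal to $d-1$: in a connected graph every $e_u-e_v$ is a signed sum of edge vectors along an undirected $u$--$v$ path, so the span contains the $d-1$ independent vectors $e_1-e_2,\dots,e_1-e_d$ and hence equals $H$. I expect the relative-interior step to be the main obstacle: passing from $0\in\mathcal{P}(G)$ to $0\in\operatorname{relint}\mathcal{P}(G)$ is exactly what consumes the full strength of the hypothesis, whereas the dimension count is routine linear algebra that, strictly speaking, needs the (implicit) connectivity of $G$ as an extra assumption.
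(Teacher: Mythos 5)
The paper states this proposition as a quoted result from \cite{Hig} and gives no proof of its own, so there is nothing to compare against; judged on its own, your argument is correct and is essentially the standard one: the generators lie in the hyperplane $\sum_t x_t=0$, a directed cycle telescopes to $0$, summing one cycle per arrow writes the origin as a convex combination with all weights positive (hence $0$ lies in the relative interior), and the vertices $e_i-e_j$ are primitive. Your side remark is also apt: the conclusion $\dim\mathcal{P}(G)=d-1$ genuinely requires $G$ to be connected (two disjoint directed cycles satisfy the hypothesis but give a lower-dimensional polytope), so connectivity is an implicit standing assumption inherited from \cite{Hig} rather than a consequence of the stated hypothesis.
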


If such a polytope is smooth, then we call it \emph{smooth Fano polytopes arising from digraph (SFPdG for short).} The property `smooth' has been characterised by A. Higashitani in (\cite{Hig}, Theorem $2.2$).

\subsection{Characterisation of unimodular smooth Fano polytopes}

With the help of Seymour's decomposition theorem and based on the property of the USFP, it's possible to induce a characterisation of USFPs.
\begin{Lem}[\cite{OX}, Theorem 13.3.2; \cite{Tru}, Theorem 10.4.2]
A regular matroid is graphic if and only if it has no minor isomorphic to $D^*(K_5)$ and $D^*(K_{3,3})$, where $D^*$ is the dual of the matroid $D$. In other words, a totally unimodular matrix is graphic if and only if it has no submatrix $m^{*}(K_5)$ or $m^{*}(K_{3,3})$, where $m^*$ is the dual of the matrix $m$.
\end{Lem}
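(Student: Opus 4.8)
The plan is to deduce this lemma by specialising Tutte's excluded-minor characterisation of graphic matroids to the regular case; since the two directions of the equivalence differ sharply in difficulty, I would treat them separately.

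For necessity I would argue directly that neither $D^*(K_5)$ nor $D^*(K_{3,3})$ is graphic, so that a graphic matroid---being closed under the taking of minors---cannot contain either as a minor. By Whitney's planarity criterion, the dual (bond) matroid $D^*(G)$ of a graph $G$ is graphic if and only if $G$ is planar; as $K_5$ and $K_{3,3}$ are the two Kuratowski graphs, and hence non-planar, the matroids $D^*(K_5)$ and $D^*(K_{3,3})$ fail to be graphic. Since deletion and contraction of a graphic matroid again yield graphic matroids, no graphic matroid can possess a non-graphic minor, and the forward implication follows with no appeal to any deep result.

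For sufficiency I would invoke the full force of Tutte's excluded-minor theorem, which asserts that the complete list of excluded minors for the class of graphic matroids is $U_{2,4}$, $F_7$, $F_7^*$, $D^*(K_5)$ and $D^*(K_{3,3})$. Independently, the excluded minors for the class of regular matroids are exactly $U_{2,4}$, $F_7$ and $F_7^*$. Because the matroid in question is assumed regular, it already contains none of $U_{2,4}$, $F_7$, $F_7^*$ as a minor; thus the only possible obstructions to graphicness that can survive are $D^*(K_5)$ and $D^*(K_{3,3})$, and ruling these two out forces the matroid to be graphic. This is the hard direction, and I expect the main obstacle to be exactly that it rests on Tutte's deep theorem rather than on any elementary manipulation, so here I would simply cite it.

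It then remains to transcribe the matroid statement into the matrix language of the second sentence. By Lemma 3 a regular matroid of non-zero rank $n$ is represented by a totally unimodular standard matrix $[\,I_n \mid A\,]^T$, and conversely every totally unimodular matrix represents a regular matroid; under this correspondence the matroid dual $D^*$ is realised by the dualizing operation of Definition 6, which sends $[\,I_n \mid A\,]^T$ to $[\,-A^T \mid I\,]^T$, while passing to a minor corresponds to passing to a submatrix after suitable pivoting. Consequently the condition that $D$ have no minor isomorphic to $D^*(K_5)$ or $D^*(K_{3,3})$ becomes the condition that the matrix have no submatrix $m^*(K_5)$ or $m^*(K_{3,3})$, and the matrix form of the lemma follows at once from the matroid form.
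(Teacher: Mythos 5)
The paper does not actually prove this lemma: it is imported verbatim as a citation to Oxley (Theorem 13.3.2) and Truemper (Theorem 10.4.2), so there is no internal argument to compare yours against. That said, your derivation is exactly the standard one that those sources use, and it is correct: necessity follows from Whitney's criterion that $D^*(G)$ is graphic precisely when $G$ is planar, together with the fact that the class of graphic matroids is minor-closed; sufficiency follows by intersecting Tutte's excluded-minor list for graphic matroids $\{U_{2,4}, F_7, F_7^*, D^*(K_5), D^*(K_{3,3})\}$ with the excluded-minor characterisation of regular matroids $\{U_{2,4}, F_7, F_7^*\}$. Citing Tutte's theorem rather than reproving it is entirely appropriate here, since that is where all the depth lives.

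One point worth emphasising in your final paragraph: the translation from ``no minor isomorphic to $D^*(K_5)$ or $D^*(K_{3,3})$'' to ``no submatrix $m^*(K_5)$ or $m^*(K_{3,3})$'' is only accurate up to pivoting and elementary operations --- a minor of the matroid represented by a totally unimodular matrix $A$ need not appear as a literal submatrix of $A$, but only as a submatrix of some matrix obtained from $A$ by pivots. You flag this correctly with ``after suitable pivoting''; the lemma as stated in the paper glosses over it, so your version is, if anything, the more careful one.
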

Recall that $K_5$ is the complete graph with $5$ vertices and $K_{3,3}$ is the complete bipartite graph with $3$ vertices in each part.

\begin{Rem}
The matroids $D^*(K_{3,3})$ and $D^*(K_5)$ have representative matrices over $GF(3)$ as below, which are always denoted as $m^*(K_5)$ and $m^*(K_{3,3})$. 
$$
{m^*(K_{3,3})}=
\begin{pmatrix}
0& 1& -1& -1 \\
0& -1& 1& 0 \\
-1& 0& 1& 0 \\
-1& 1& 0& 1 \\
1& 0& 0& 0 \\
0& 1& 0& 0 \\
0& 0& 1& 0 \\
0& 0& 0& 1 
\end{pmatrix}
$$
and
$$
{m^*(K_{5})}=
\begin{pmatrix}
0& 0& 1& 0& -1& 1 \\
0& 1& 0& -1& 0& -1 \\
-1& 0& 0& 1& 1& 0 \\
1& -1& -1& 0& 0& 0 \\
1& 0& 0& 0& 0& 0 \\
0& 1& 0& 0& 0& 0 \\
0& 0& 1& 0& 0& 0 \\
0& 0& 0& 1& 0& 0 \\
0& 0& 0& 0& 1& 0 \\
0& 0& 0& 0& 0& 1 \\
\end{pmatrix}
$$
respectively. Generally speaking, the representative matrices are not necessarily unique over $GF(3)$.
\end{Rem}

Let $P$ be a USFP of dimension $n$ and $I$ be a subset of $\{1,\dots,n\}$. Then the $I$-projections of $P$ are defined to be the maps $f:P \longrightarrow P_I$, where $P_I$ is a polytope obtained by removing the $i$-th coordinate of $P$, for any $i \notin I$. When $x=0$, $\langle u_F,x \rangle = 0 >-1$ holds for any vector $u_F$, so the following lemma is obivously true. 
\begin{Lem}
The $I$-projections of $P$ preserve the projectivity of $P$.
\end{Lem}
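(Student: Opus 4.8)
The plan is to unwind the definition: a polytope is \emph{projective} exactly when it contains the origin in its topological interior, so the lemma asks us to show that if $0 \in \mathrm{int}(P)$ then $0 \in \mathrm{int}(P_I)$, where $P_I = \pi_I(P)$ and $\pi_I : \mathbb{R}^n \to \mathbb{R}^{|I|}$ is the linear surjection that keeps the coordinates indexed by $I$ and discards the rest. Note that nothing here uses smoothness, the Fano property, or unimodularity; only projectivity and the linearity of $\pi_I$ are needed, so the statement is really a general fact about coordinate projections of polytopes that contain the origin in their interior.

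First I would record that $P_I$ is genuinely full-dimensional in $\mathbb{R}^{|I|}$, so that speaking of its interior is meaningful: since $P$ is $n$-dimensional and $\pi_I$ is a surjective linear map, the image $P_I = \pi_I(P)$ affinely spans $\mathbb{R}^{|I|}$, hence $\dim P_I = |I|$. Next, because $P$ is projective there is an $\varepsilon > 0$ with $B_\varepsilon(0) \subset P$, where $B_\varepsilon(0)$ denotes the open Euclidean ball in $\mathbb{R}^n$. The image $\pi_I\big(B_\varepsilon(0)\big)$ is precisely the open ball of radius $\varepsilon$ about the origin in $\mathbb{R}^{|I|}$, since the coordinate projection of a ball centred at $0$ onto a coordinate subspace is the ball of the same radius centred at $0$. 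As $\pi_I\big(B_\varepsilon(0)\big) \subset \pi_I(P) = P_I$, we obtain an open neighbourhood of $0$ inside $P_I$, i.e. $0 \in \mathrm{int}(P_I)$, which is exactly projectivity of $P_I$. Equivalently, one may invoke the fact that a surjective linear map is open, so $\pi_I(\mathrm{int}\,P)$ is open, is contained in $P_I$, and contains $\pi_I(0) = 0$.

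This also matches the facet-normal viewpoint flagged just before the statement: writing $P = \{\,x : \langle u_F, x\rangle \geq -1 \text{ for every facet } F\,\}$, the origin satisfies every constraint strictly because $\langle u_F, 0\rangle = 0 > -1$, and the same strict inequalities persist for whatever facet description $P_I$ inherits, so $0$ remains interior after projecting. I would present the ball argument as the clean proof and cite the inequality version as the heuristic the author intends.

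The hard part, such as it is, is not a computation but a point of rigour: one must be sure that the coordinate projection sends interior points to interior points. This is immediate from openness of surjective linear maps, or from the explicit ball computation above, so I expect no real obstacle; the only thing to guard against is tacitly assuming $P_I$ is full-dimensional, which is why I would verify $\dim P_I = |I|$ explicitly at the outset before invoking the notion of $\mathrm{int}(P_I)$.
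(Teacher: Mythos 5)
Your proof is correct, and it is in fact more complete than what the paper offers: the paper's entire justification is the one-line remark preceding the lemma, namely that $\langle u_F, 0\rangle = 0 > -1$ for every facet normal $u_F$, which really only restates that the origin is interior to $P$ (or tacitly to $P_I$, under a normalization of the facet inequalities that already presupposes the conclusion). The step the paper elides --- that a coordinate projection carries an interior point of $P$ to an interior point of $P_I$ --- is exactly what your ball computation (equivalently, the openness of a surjective linear map) supplies, and your preliminary check that $\dim P_I = |I|$ makes the appeal to $\mathrm{int}(P_I)$ legitimate. So the two arguments share the same underlying idea, but yours closes the gap in rigour; the paper's version has the sole advantage of matching the facet-inequality language used elsewhere in Section 2, which you already accommodate by citing that viewpoint as the intended heuristic. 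No issues to report.
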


\begin{The}[characterisation of USFP]
Let $P$ be a unimodular smooth Fano polytope. Then its coordinate matrix $M$ can be constructed from graphic matrices by elementary operations, dualizing and $k$-sums for $k=1,2,3$. 
\end{The}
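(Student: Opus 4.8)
The plan is to combine Proposition 1 with the matrix form of Seymour's theorem (Theorem 3) and then to eliminate the sporadic block $m(R_{10})$ from the list of building blocks. By Proposition 1 the standard form of the coordinate matrix $M$ is totally unimodular, so by Lemma 3 and Remark 1 it represents a regular matroid $D(M)$ of rank $n$. Applying Theorem 3 to $M$, we obtain $M$ from building blocks by elementary operations, dualizing and $k$-sums for $k=1,2,3$, where each block is graphic, or cographic, or a copy of $R_{10}$; moreover, by Seymour's theorem each such block is (isomorphic to) a minor of $D(M)$. Since a cographic matrix is by definition the (matroid) dual of a graphic one, the statement will follow once we show that $R_{10}$ never occurs, i.e. that $R_{10}$ is not a minor of $D(M)$.

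The key observation driving the exclusion of $R_{10}$ is geometric: every row of $m(R_{10})$ satisfies $x_1+x_2+x_3+x_4+x_5=1$, so all ten points lie on an affine hyperplane missing the origin. Hence $m(R_{10})$ is the coordinate matrix of no projective polytope — the origin is not even in the affine hull of its rows — and this degeneracy is invariant under the unimodular column operations and coordinate reflections that relate representations of a regular matroid. In short, $R_{10}$ ``is not a USFP''.

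To promote this to the statement that $R_{10}$ is not a minor, I would realize the minor geometrically. A matroid minor of $D(M)$ is obtained by contractions followed by deletions. After a unimodular change of coordinates one may assume the contracted elements are among the standard-basis vertices $e_1,\dots,e_{n-5}$ coming from the identity block of the standard form; contracting them is exactly the $I$-projection forgetting those coordinates, and by Lemma 5 the projected polytope $P_I\subset\mathbb{R}^5$ is still projective, with matroid the corresponding contraction of $D(M)$. Thus an $R_{10}$ minor would exhibit a sub-configuration of the vertices of the projective polytope $P_I$ whose matroid is $R_{10}$ and whose totally unimodular coordinate matrix equals $m(R_{10})$ up to the signings and permutations allowed for a regular representation. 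Comparing this with the projectivity of $P_I$ and the hyperplane degeneracy above is meant to yield the contradiction; hence no $R_{10}$ block appears, and $M$ is built from graphic matrices by elementary operations, dualizing and $k$-sums, as claimed.

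I expect the main obstacle to be precisely this last reduction: turning the non-projectivity of $m(R_{10})$ into a genuine contradiction in the presence of deletions. Contractions are controlled cleanly by Lemma 5, but deletion removes vertices, and a subset of the vertices of a projective polytope need not itself be projective, so the single-hyperplane argument does not transfer verbatim; one must also pin down the sign/representation ambiguity of $R_{10}$ (the fixed coordinates of the $v_s$ realize one specific signing of $m(R_{10})$, not an arbitrary one, and row signings split the ``one hyperplane off the origin'' into two symmetric ones that are compatible with projectivity). Making this step airtight — ideally by isolating a minor-closed invariant of the USFP matroid that $R_{10}$ violates, and exploiting the self-duality $R_{10}\cong R_{10}^{*}$ to treat deletions and contractions symmetrically — is where the real work lies.
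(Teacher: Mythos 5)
Your strategy is the same as the paper's: Proposition 1 plus the matrix form of Seymour's theorem (Theorem 3), followed by excluding $R_{10}$ via the observation that every row of $m(R_{10})$ lies on the hyperplane $x_1+\cdots+x_5=1$ together with Lemma 5 ($I$-projections preserve projectivity). The difference is that you try to run the exclusion at the level of abstract matroid minors (contractions followed by deletions), and you correctly observe that the deletion step does not transfer --- which is exactly where your write-up stops. The paper never passes to minors at all: it works with the explicit block shape of the $1$-, $2$-, $3$-sums. If $m(R_{10})$ occurs as a summand then, up to elementary operations, its five columns occupy a fixed column set $I$ of $M$, its rows appear among the rows of $M$ restricted to $I$, and every other row of $M$ vanishes on $I$. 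Hence the $I$-projection sends $P$ onto the convex hull of the origin and points of the hyperplane $x_1+\cdots+x_5=1$; the image lies in the slab $0\le x_1+\cdots+x_5\le 1$, so the origin cannot be an interior point of $P_I$, contradicting Lemma 5. No deletion ever has to be analyzed, so the obstacle on which you defer the ``real work'' simply does not arise in the paper's formulation. If you adopt the sum-block viewpoint instead of the minor viewpoint, your argument closes in the same way.

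Your second worry --- the signing ambiguity of the representative matrix of $R_{10}$ --- is genuine, and the paper does not address it either. Multiplying a row of the coordinate matrix by $-1$ preserves the matroid and total unimodularity but replaces a vertex $v$ by $-v$; after such resignings the rows of a representative matrix of $R_{10}$ lie on the two hyperplanes $x_1+\cdots+x_5=\pm 1$, which by itself is compatible with the origin being interior (compare the cross-polytope). So ``the summand is isomorphic to $R_{10}$'' pins down the submatrix only up to row and column resignings and pivoting, while the one-hyperplane argument uses the specific signing of $m(R_{10})$ displayed in the paper. A fully airtight version of either your argument or the paper's must dispose of the finitely many inequivalent TU representations of $R_{10}$, not just the displayed one. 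As written, then, your proposal is a correct plan that coincides in strategy with the paper's proof, but it is not yet a complete proof: the step you flag as the main obstacle is resolved in the paper by a different (block-structural) reduction, and the signing issue you raise remains open in both treatments.
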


\begin{proof} By the Seymour's decomposition theorem, $M$ can be decomposed into graphic matrices, cographic matrices and $m(R_{10})$ by elementary operations, dualizing and $k$-sums. Thus, we have to show that $m(R_{10})$ is not possible to appear as a submatrix. 
\par
Assume that $m(R_{10})$ appears in the decomposition of $M^T$, then it also appears as a submatrix of $M$ (up to elementary operations). By looking into the $k$-sums of the matrices, $R_{10}$ should be a part of the following form (w.l.o.g.,)
$$
M=
{\begin{pmatrix}
\cdots & A_1 & a_1 & b_1 & 0 & 0 & 0 & \cdots \\
\cdots & 0 & a & b & {m(R_{10})}  & c & 0 & \cdots \\
\cdots & 0 & 0 & 0 & 0       & c_2 & A_2 & \cdots
\end{pmatrix}}
$$
where $a_i,b_i,c_i$ are column vectors. Let $I$ be an index set containing all the columns of ${m(R_{10})}$, then the $I$-project maps $M$ to ${m(R_{10})}$. However, the sum of any row of ${m(R_{10})}$ equals to $1$. Then the polytope $P_{I}$ lies in the hyperplane $x_1 + x_2 +x_3 +x_4 +x_5 =1$ which does not contain the origin. This contradicts with the Lemma $5$.
\par
\end{proof}

\begin{Exa}
In dimension $4$, there is exactly one USFP which is not SFPdG since it possesses $m^*(K_{3,3})$ as a submatrix. It's called \textup{F.4D.0037} in the database polyDB \textup{\cite{Pol}} and its matrix is:
$$
\begin{pmatrix}
0&  1& -1& 1 \\
0&  0& -1& 0 \\
1& 1& -1& 0 \\
0& 1&  0& 1 \\
0& 0& 0& -1 \\
-1& 0& 0& 0 \\
-1& 1& 0& 0 \\
0& -1& 0& 0 \\
1& -1& 1& -1 
\end{pmatrix}
$$

Trivially, ${m^*(K_5)}$ is not a submatrix of a $5$-dimensional USFP because of the size. In addition, ${m^*(K_5)}$ should not be the coordinate matrix of a $6$-dimensional USFP, since the polytope with representative matrix
$$
{m^*(K_{5})}=
\begin{pmatrix}
0& 0& 1& 0& -1& 1 \\
0& 1& 0& -1& 0& -1 \\
-1& 0& 0& 1& 1& 0 \\
1& -1& -1& 0& 0& 0 \\
1& 0& 0& 0& 0& 0 \\
0& 1& 0& 0& 0& 0 \\
0& 0& 1& 0& 0& 0 \\
0& 0& 0& 1& 0& 0 \\
0& 0& 0& 0& 1& 0 \\
0& 0& 0& 0& 0& 1 \\
\end{pmatrix}
$$
is not simplicial. 
\end{Exa}

\begin{Exa}
Another couterexample in higher dimension could be: 
$$
\begin{pmatrix}
-1 & -1 & 0 & 0 & 0 & 0 \\
-1 & 0 & -1 & 0 & 0 & 1 \\
0 & -1 & 0 & -1 & -1 & -1 \\
0 & 0 & -1 & -1 & 0 & 0 \\
0 & 0 & 0 & 0 & 0 & 1 \\
0 & 0 & 0 & 0 & 1 & 0 \\
0 & 0 & 0 & 1 & 0 & 0 \\
0 & 0 & 1 & 0 & 0 & 0 \\
0 & 1 & 0 & 0 & 0 & 0 \\
1 & 0 & 0 & 0 & 0 & 0 \\
1 & 1 & 1 & 1 & 1 & 0 
\end{pmatrix}
$$
The totally unimodularity can be verified by checking its submatrix $S$ consisting of the first $4$ rows and the last row. Notice that the $2$nd, $3$rd and the last rows are linearly dependent. So any $5 \times 5$-submatrix of $S$ has determinant $0$. So, we may consider $S'$ by removing the last row of $S$. It is not difficult to find that $S'$ is the transpose of a graphic matrix up to multiplication of $-1$ on any row or column. The related graph has $5$ vertices and can be given by arrows:
$$
(1,2),(1,3),(4,2),(4,3),(5,3),(2,3).
$$
Thus, $S$ is totally unimodular and so is the original matrix. 
\\
Regarding facets, there can not be more than $6$ vertices lying on any given hyperplane. So the polytope above is also simplicial. By looking at the first four columns, one may easily check that it's unimodularly equivalent to $m^*(K_{3,3})$. Thus, the polytope above is a USFP but the matrix is definitely not graphic, i.e., it is not an SFPdG.
\end{Exa}

\section{Deeply Monotone Polytopes}

\subsection{Deeply Smoothness}
This subsection is mainly a brief introduction to Luis Crespo's works on deeply monotone polytopes.
\begin{Def}
\emph{Let $F$ be a face of codimension $k$ of a smooth lattice polytope $P$, obtained as the intersection of $k$ facets with facet inequalities $\langle u_i, x \rangle \leq b_i$, for $i=\{1,\dots,k\}$ (with $u_i$ primitive).} The first displacement \emph{of $F$ is defined to be:}
$$
F_0 = P \cap \{x \in \mathbb{R} \ |\ \langle u_i, x \rangle = b_i -1, \forall i=1,\dots,k \}.
$$
\end{Def}

\begin{Def}
\emph{A lattice polytope $P$ is said to be} UT-free \emph{if there is no $2$-face in $P$ that is a unimodular triangle.} A unimodular triangle \emph{is a lattice triangle whose edge contains no lattice point in its relative interior.}
\end{Def}

\begin{Lem}
[\cite{CPS}, Lemma 4.3] Let $P$ be a smooth lattice polytope and let $F$ be a facet of $P$. Let $F_0$ be the first displacement of $F$. Then:
\begin{itemize}
\item[0.] $F_0$ is a lattice polytope.
\item[1.] If $P$ is monotone (i.e. smooth reflexive), then $F_0$ is reflexive.
\item[2.] $F_0$ is normally isomorphic to $F$, except perhaps if $P$ has a $2$-face that is a unimodular triangle with an edge in $F$ and third vertex in $F_0$.
\end{itemize}
\end{Lem}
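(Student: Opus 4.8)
The plan is to work throughout in a neighbourhood of the facet $F$, exploiting the local basis that smoothness provides at each vertex. Write the facet inequality as $\langle u,x\rangle\le b$ with $u$ primitive, so $F=P\cap\{\langle u,x\rangle=b\}$. Since $P$ is simple, each vertex $v$ of $F$ has $n$ edges, of which $n-1$ lie in $F$ and exactly one, with primitive direction $e_v$, leaves $F$; smoothness (the edge directions at $v$ form a lattice basis, with $u$ the inner normal of $F$) forces $\langle u,e_v\rangle=-1$. As $u$ is primitive, this yields the splitting $M=\mathbb{Z}e_v\oplus(M\cap u^{\perp})$, which lets me pass between $P$ and its slices lattice-theoretically. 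Finally, all vertices of $P$ are lattice points and $u$ is primitive, so every vertex sits at an integer level $\langle u,\cdot\rangle$, and there is no vertex strictly between the consecutive levels $b-1$ and $b$.

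For part $0$ I would describe $F_0=P\cap\{\langle u,x\rangle=b-1\}$ as the bottom face of the collar $C=\{x\in P:\langle u,x\rangle\ge b-1\}$. By the level observation the only vertices of $P$ above level $b-1$ are the vertices of $F$, and the only edges of $P$ leaving that level are the inward edges $e_v$. Hence every vertex of $F_0$ is either a vertex of $P$ already lying at level $b-1$, or the point $v+e_v$ where an inward edge meets the hyperplane; since $\langle u,e_v\rangle=-1$, this crossing occurs after a single primitive step and is a lattice point. Thus $F_0$ is a lattice polytope.

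For part $2$ the heart of the argument is two-dimensional. Fix an edge $e=[v,v']$ of $F$. In the simple polytope $P$ the edges $e$ and $e_v$ span a unique $2$-face $Q$; one checks $Q\cap F=e$ and that the second edge of $Q$ at $v'$ is exactly $e_{v'}$. Choosing coordinates in the plane of $Q$ with $v$ at the origin, $e$ horizontal and $e_v=(0,-1)$, smoothness at $v$ and $v'$ pins down $e_{v'}=(p,-1)$ for an integer $p$, and slicing $Q$ at level $b-1$ produces the segment from $v+e_v$ to $v'+e_{v'}$, parallel to $e$, which is the edge of $F_0$ matching $e$ — unless $v+e_v=v'+e_{v'}$, i.e. unless $Q$ is a triangle whose apex lies on $H$. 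Since every triangular $2$-face of a smooth polytope is unimodular, this degenerate case is precisely the excluded configuration (a unimodular triangle with an edge in $F$ and third vertex in $F_0$), and it is the only way an edge of $F$ can collapse. Away from it, the correspondence $v\mapsto v+e_v$ is a combinatorial isomorphism carrying each ridge $F\cap G_i$ to the facet $F_0\cap G_i$ with the same projected normal, so $F$ and $F_0$ share a normal fan and are normally isomorphic.

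For part $1$ I would first reduce using reflexivity: a monotone polytope has $0$ in its interior and all facets at lattice height $1$, so $b=1$ and $F_0=P\cap u^{\perp}$ is the central slice. Then $0\in\mathrm{relint}(F_0)$ is immediate, the vertices are lattice by part $0$, and each facet of $F_0$ is $F_0\cap G_i$ lying in $\{x\in u^{\perp}:\langle u_i,x\rangle=1\}$; using the splitting $M=\mathbb{Z}e_v\oplus(M\cap u^{\perp})$ at a vertex of the corresponding ridge, I would check that the induced normal is primitive in the dual lattice $N/\mathbb{Z}u$ and that the facet sits at lattice distance $1$ from $0$, which is exactly reflexivity. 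I would argue this directly, not through part $2$, since part $1$ is asserted without exception and must survive the collapse case. The main obstacle I anticipate is the combinatorial stability of part $2$: it is easy to see edges go to parallel edges, but one must rule out that several facets of $F$ merge, or that new facets appear, in $F_0$ once the triangle degeneration is excluded; the secondary technical point is the primitivity of the induced normals in part $1$, which is what actually forces reflexivity rather than mere latticeness. Both reduce to careful bookkeeping with the local smooth bases, and that is where the real content lies.
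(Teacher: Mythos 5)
First, a caveat: the paper does not actually prove this statement. It is quoted from \cite{CPS}, Lemma 4.3 and used as a black box, so there is no in-paper proof to compare yours against. Judged on its own, your outline follows the natural route (and, as far as I can tell, the one in the cited source): the height computation $\langle u,e_v\rangle=-1$ from smoothness plus primitivity of $u$, the resulting description of the vertices of $F_0$ as the points $v+e_v$, and the two-dimensional analysis of the $2$-face spanned by an edge of $F$ and the outward edge, with the triangle degeneration as the only obstruction. Part 0 is complete. For part 1 you are working harder than necessary: once part 0 is known, primitivity of the induced normal $\bar u_i$ on $M\cap u^{\perp}$ is automatic --- if $\bar u_i=c\bar w$ with $c\ge 2$ and $\bar w$ primitive, the corresponding facet of $F_0$ would lie on $\{\langle \bar w,x\rangle=1/c\}$ yet contain lattice points, a contradiction --- so you need not chase the splitting at a ridge vertex (and you should not, since a facet of $P$ can support a facet of the central slice without meeting $F$ in a ridge, which would break your plan as stated).

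Two genuine issues remain. First, your claim that every triangular $2$-face of a smooth polytope is unimodular is false: $k\Delta_2=\mathrm{conv}(0,ke_1,ke_2)$ is a smooth triangle for every $k$, and for $k\ge 2$ it is not unimodular. What actually saves the argument is that in the degenerate case the apex of the triangle lies at lattice height exactly one below the base edge, and smoothness at the apex then forces the base to have lattice length $1$, hence the triangle to be unimodular; you should argue this directly rather than invoke the false general statement. Second, the combinatorial stability of part 2 --- surjectivity and injectivity of $v\mapsto v+e_v$ and the matching of facets --- is essentially the content of the lemma, and you flag it but do not prove it. It does close: surjectivity holds because any vertex $w$ of $F_0$ must have a neighbour in $P$ at strictly larger height (otherwise $w$ would maximize $\langle u,\cdot\rangle$), and that neighbour is forced into $F$ at one primitive step from $w$; an injectivity failure $v+e_v=v'+e_{v'}=w$ forces $w$ to be a vertex of $P$ adjacent to both $v$ and $v'$, and the $2$-face spanned by $[w,v]$ and $[w,v']$ to be precisely the excluded triangle. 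As written, though, the proposal is a correct plan with the right key ideas rather than a finished proof.
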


\begin{Def}
\emph{Let $v$ be a vertex of an $n$-dimensional lattice smooth polytope $P$, and let $u_1, \dots,u_n$ be the primitive edge vectors at $v$.} The corner parallelepiped of $P$ at $v$ \emph{is defined to be the parallelepiped:}
$$
\{v+\sum^{n}_{i=1} \lambda_i u_i \ |\ \lambda_i \in [0,1], \forall i \in \{1,\dots,n\}\}
$$
\emph{Then, $P$ is called} deeply smooth \emph{if it contains the corner parallelepipeds at all its vertices. We call $P$ a} deeply monotone polytope (DMP, for short) \emph{if it is deeply smooth and monotone.}
\end{Def}
By the definition, deeply smooth polytopes are alwalys UT-free.

\begin{The}
[\cite{CPS}] The following properties are equivalent for a smooth lattice polytope $P$:
\begin{itemize}
\item [1.] $P$ is deeply smooth.
\item [2.] The first displacement of every face $F$ of $P$ is normally isomorphic to $F$.
\item [3.] $P$ and the first displacement of all its faces are UT-free.
\end{itemize}
\end{The}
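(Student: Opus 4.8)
The statement is an equivalence among three conditions, so the plan is to prove the cyclic chain $1\Rightarrow 3\Rightarrow 2\Rightarrow 1$, using Lemma 6 (the facet case of the first displacement) as the engine and promoting it to arbitrary faces by an induction on codimension. The recurring technical device I would set up first is that, under UT-freeness, the first displacement of a codimension-$k$ face $F=F_1\cap\cdots\cap F_k$ coincides with the iterated single-facet displacement: displace $P$ along $F_1$ to obtain the slice $(F_1)_0$, which is again a smooth lattice polytope by Lemma 6(0), observe that $F_2,\dots,F_k$ cut out facets on $(F_1)_0$, and repeat. Establishing this equality, together with the smoothness of every intermediate slice, by induction on $k$ from Lemma 6 is the scaffolding on which the three implications hang.

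For $1\Rightarrow 3$: the Remark following the definition of deeply smooth already gives that $P$ itself is UT-free, so it remains to see that every first displacement is UT-free as well. Here I would prove the sublemma that the first displacement of a face of a deeply smooth polytope is again deeply smooth, by exhibiting each corner parallelepiped of the displaced face as the image of (part of) a corner parallelepiped of $P$; UT-freeness of the displacements then follows from the same Remark applied to them.

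For $3\Rightarrow 2$: in the facet case, Lemma 6(2) states that $F_0$ can fail to be normally isomorphic to $F$ only through a $2$-face of $P$ that is a unimodular triangle with an edge in $F$ and apex in $F_0$; the UT-freeness of $P$ forbids such a triangle outright, so $F_0\cong F$ normally. For higher codimension I would feed this into the iterated-displacement scaffolding, using that condition $3$ supplies UT-freeness of each intermediate slice (itself a first displacement of a face) to kill the exception at every stage.

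For $2\Rightarrow 1$: fix a vertex $v$ with primitive edge vectors $u_1,\dots,u_n$ and dual facet normals $u_1^*,\dots,u_n^*$. For each nonempty $T\subseteq\{1,\dots,n\}$ the face $f_T=\bigcap_{i\in T}F_i$ has the property that the vertex of its first displacement corresponding to $v$ sits at $v+\sum_{i\in T}u_i$, since $\langle u_i^*,\,v+\sum_{j\in T}u_j\rangle=b_i-[i\in T]$. Because $(f_T)_0\subseteq P$ always holds, while normal isomorphism guarantees that $(f_T)_0$ is nonempty with a vertex at that position, every corner $v+\sum_{i\in T}u_i$ of the corner parallelepiped lies in $P$; convexity of $P$ then yields the whole parallelepiped, i.e. deep smoothness. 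The hard part throughout is the passage from facets to arbitrary faces: Lemma 6 only controls a single displacement, so the entire argument rests on the iterated-displacement scaffolding and on checking that smoothness — and, in $1\Rightarrow 3$, deep smoothness — is inherited by every intermediate slice. I expect the descent of deep smoothness to the displaced faces in $1\Rightarrow 3$, and the faithful matching of faces under normal isomorphism needed in $2\Rightarrow 1$, to be the two points demanding the most care.
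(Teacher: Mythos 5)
First, a caveat: the paper does not prove this theorem at all --- it is imported verbatim from \cite{CPS} as background for Section 3 --- so there is no internal proof to measure your attempt against. What follows therefore assesses your outline on its own merits. Your plan ($1\Rightarrow 3\Rightarrow 2\Rightarrow 1$, with Lemma 6 as the single-facet engine and an induction on codimension through iterated slices) is the natural one and, as far as I can tell, is essentially the strategy of \cite{CPS}; the directions $1\Rightarrow 3$ and $3\Rightarrow 2$ are fine as sketched.

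The one place where the sketch has a substantive gap, rather than omitted routine detail, is the step in $2\Rightarrow 1$ where you assert that normal isomorphism puts a vertex of $(f_T)_0$ \emph{at} $v+\sum_{i\in T}u_i$. Your computation $\langle u_i^*,\,v+\sum_{j\in T}u_j\rangle=b_i-[i\in T]$ only shows that this point lies on the correct affine subspace; it does not show that the point lies in $P$, which is exactly what deep smoothness demands. Normal isomorphism by itself locates the vertex of $(f_T)_0$ corresponding to $v$ only as the point $v''$ with $\langle u_i^*,v''\rangle=b_i-1$ for $i\in T$ and $\langle u_j^*,v''\rangle=b_j'$ for $j\notin T$, where $b_j'=\max_{(f_T)_0}\langle u_j^*,\cdot\rangle$ could a priori be strictly smaller than $b_j$; expanding $v''-v$ in the basis $u_1,\dots,u_n$ gives $v''=v+\sum_{i\in T}u_i+\sum_{j\notin T}(b_j-b_j')\,u_j$, so the extra terms must be ruled out. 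The missing observation is the smooth-vertex fact that the edge at $v$ not contained in $F_i$ is contained in every other facet through $v$: hence $v+u_i$ lies in the slice $\{\langle u_i^*,\cdot\rangle=b_i-1\}$ and still attains $\langle u_j^*,\cdot\rangle=b_j$ for $j\neq i$, which forces $b_j'=b_j$ and $v''=v+\sum_{i\in T}u_i$. This same fact is also what is needed to make your iterated-displacement scaffolding agree with the simultaneous displacement in the first place. Once it is in hand, a cleaner route for $2\Rightarrow 1$ is to run the slices one facet at a time: each intermediate slice is a smooth lattice polytope whose vertex corresponding to $v$ is $v+\sum_{i\in T}u_i$ with edge directions $u_j$ for $j\notin T$, and since lattice edges have lattice length at least one, the next corner point automatically lies in the slice and hence in $P$; convexity then yields the whole parallelepiped, as you say.
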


\begin{The}
[\cite{CPS}] Let $P$ be a DMP. If $u_1$ and $u_2$ are primitive edge vectors of $P$ at the same vertex $v$, then $\mathcal{E}(P)$ contains $u_1$, $u_2$, and at least one of $u_1 + u_2$ and $u_1 - u_2$. Furthermore, P satisfies the star Ewald condition and the strong Ewald condition.
\end{The}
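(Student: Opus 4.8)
The plan is to fix a vertex $v$ of the deeply monotone polytope $P$ and work in the lattice basis supplied by smoothness, reducing everything to a statement about the vertices of $P^{*}$ (the facet normals of $P$). First I would choose coordinates so that the primitive edge vectors at $v$ are the standard basis $e_{1},\dots,e_{n}$; since $P$ is reflexive and smooth this forces $v=-\sum_{i}e_{i}$ and makes the $n$ facets through $v$ read $\langle e_{i},x\rangle\ge -1$, so their normals are exactly $e_{1},\dots,e_{n}$. Deep smoothness then says the corner parallelepiped at $v$, namely $[-1,0]^{n}$, is contained in $P$. For a lattice point $\lambda$ one has $\lambda\in P$ iff $\langle w,\lambda\rangle\ge -1$ for every vertex $w$ of $P^{*}$, and $\lambda\in\mathcal{E}(P)$ iff moreover $\langle w,\lambda\rangle\le 1$ for all such $w$. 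I record that, applying the facet inequality of any $w$ to the parallelepiped $[-1,0]^{n}$, every vertex $w$ of $P^{*}$ has at most one positive coordinate in this frame, and that coordinate (if present) equals $1$. This immediately yields the ``negative half'': $-u_{i}=-e_{i}\in P$, and since $w_{1}+w_{2}\le 1$ for all $w$, also $-(u_{1}+u_{2})\in P$.

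The heart of the proof is the ``positive half'', i.e. $u_{i}=e_{i}\in\mathcal{E}(P)$, which amounts to showing that no vertex $w$ of $P^{*}$ has a coordinate $\le -2$. I would stress that this does \emph{not} follow from deep smoothness at $v$ alone: a normal such as $w=(1,-2,0,\dots,0)$ is primitive, has a single positive coordinate, and is compatible with $[-1,0]^{n}\subseteq P$, yet it would push $e_{2}$ out of $P$. The missing input must be deep smoothness at the other vertices. The plan is to feed the corner parallelepipeds at the neighbours of $v$ into the characterisation from the preceding theorem (the first displacement of every face is normally isomorphic to that face, equivalently $P$ and all its first displacements are UT-free), together with the first-displacement lemma (\cite{CPS}, Lemma 4.3). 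Concretely, walking from $v$ along the edge $e_{i}$ to its neighbour $v'$ and transporting the containment $[-1,0]^{n}\subseteq P$ through the normally isomorphic displacements should show that a facet normal with a coordinate $\le -2$ violates one of these parallelepiped containments. The target of this step is the clean statement that every vertex of $P^{*}$ lies in $\{-1,0,1\}^{n}$; equivalently, the coordinate matrix of $P^{*}$ is totally unimodular. This is where I expect the real work to lie, and it is the main obstacle.

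Granting that every $w$ has entries in $\{-1,0,1\}$ with at most one $+1$, the diagonal dichotomy becomes combinatorial. Then $w_{1}+w_{2}\in\{-2,-1,0,1\}$ and $w_{1}-w_{2}\in\{-2,\dots,2\}$, so (using that $-(u_{1}+u_{2})\in P$ and $-(u_{1}-u_{2})$ is controlled the same way) $u_{1}+u_{2}\notin\mathcal{E}(P)$ can only fail through some $w$ with $w_{1}=w_{2}=-1$, while $u_{1}-u_{2}\notin\mathcal{E}(P)$ can only fail through some $w'$ with $\{w'_{1},w'_{2}\}=\{1,-1\}$. I would then show these two configurations cannot occur simultaneously for a deeply monotone polytope: restricting to the rank-two situation spanned by $u_{1},u_{2}$ and invoking UT-freeness of the relevant $2$-face and its first displacement (again via the preceding theorem) rules out the coexistence, so at least one of $u_{1}\pm u_{2}$ survives in $\mathcal{E}(P)$. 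Reflexivity makes membership of $\lambda$ and $-\lambda$ simultaneous, so it suffices to control one sign.

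Finally I would deduce the Ewald conditions from the structure just obtained, paralleling the proof of Theorem~1. For the star Ewald condition, which for $P=(\text{USFP})^{*}$ is phrased through $\mathcal{E}(P)$ and the faces of $P$, I would repeat that construction almost verbatim: given a face $f=\bigcap_{i}F_{i}$ of $P$, solve $U\cdot\lambda^{T}=-e_{1}^{T}$ for a symmetric point $\lambda$ with $\langle w_{1},\lambda\rangle=-1$ and $\langle w_{i},\lambda\rangle=0$ otherwise, and use the total unimodularity of the matrix $M$ of $P^{*}$ established above to force $M\cdot\lambda^{T}\in\{-1,0,1\}^{m}$ (this is exactly the role played by Lemma~1 and the splitting Lemma~2 in the proof of Theorem~1), so that $\lambda\in\mathrm{Star}^{*}(f)$ and $-\lambda\notin\mathrm{Star}(f)$. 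For the strong Ewald condition, read as is natural in this monotone/Fano duality as the statement that each facet $F$ of $P$ meets $\mathcal{E}(P)$ in a unimodular basis (equivalently that the smooth Fano polytope $P^{*}$ embeds vertex-by-vertex into $[-1,1]^{n}$): at a vertex $p$ of $F$ the edge vectors are symmetric points by the first part, they form a lattice basis by smoothness, and those lying on $F$ provide the required basis. The only genuinely new ingredient beyond Theorem~1 is the total unimodularity of $P^{*}$, which is precisely what the crux step above must supply.
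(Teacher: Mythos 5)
First, a point of comparison: the paper gives no proof of this statement at all; it is imported verbatim from \cite{CPS} and used as an ingredient in Lemma~7 and Theorem~7. So your proposal must stand entirely on its own, and as it stands it is an outline with the two decisive steps missing. The setup is sound: normalizing the edge vectors at $v$ to $e_1,\dots,e_n$, getting $v=-\sum_i e_i$, identifying the corner parallelepiped with $[-1,0]^n$, and deducing that every facet normal $w$ of $P$ has at most one positive coordinate, equal to $1$ --- this correctly gives $-u_i\in P$ and $-(u_1+u_2)\in P$. But the ``positive half'' ($u_i\in P$, i.e.\ no facet normal has a coordinate $\le -2$) is exactly the content of $u_i\in\mathcal{E}(P)$, and you defer it to an unspecified procedure of ``walking along edges and transporting the containment through normally isomorphic displacements,'' which you yourself label the main obstacle; no mechanism is given for converting deep smoothness at the \emph{other} vertices into the inequality $w_i\ge -1$. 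Likewise, the dichotomy --- that a DMP cannot simultaneously have a normal with $w_1=w_2=-1$ and one with $\{w_1',w_2'\}=\{1,-1\}$ --- is only asserted to follow from ``UT-freeness of the relevant $2$-face.'' These two steps are the theorem; without them the proposal proves only the easy containments.

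Two further points are incorrect or circular. The claimed equivalence between ``every vertex of $P^*$ lies in $\{-1,0,1\}^n$'' and ``the coordinate matrix of $P^*$ is totally unimodular'' is false: the $2\times 2$ matrix with rows $(1,-1)$ and $(1,1)$ has entries in $\{-1,0,1\}$ and determinant $2$ (the paper itself uses this example in the proof of Theorem~1). Total unimodularity of the matrix of $P^*$ is precisely the paper's Theorem~7, whose proof \emph{uses} the present theorem (the bound $-1\le(e_1\pm e_j)(u_k^*)\le 1$), so deriving the star and strong Ewald conditions by first establishing total unimodularity and then invoking Theorem~1 either forces you to reprove Theorem~7 inside this argument or is circular. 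Finally, the strong Ewald deduction does not work as written: the condition requires a lattice basis inside $\mathcal{E}(P)\cap F$ for each facet $F$ of $P$, and the edge vectors $u_1,\dots,u_{n-1}$ at a vertex $p\in F$ do \emph{not} lie on $F$, since $\langle w_F,u_j\rangle=0\neq -1$; the correct candidates are points such as $-u_n$ and $-u_n\pm u_j$, whose membership in $\mathcal{E}(P)$ is exactly what the first part of the theorem supplies. In short, the proposal correctly locates where the difficulty sits but does not close it, and the route chosen for the Ewald conditions rests on a stronger, unestablished (and differently proved) fact.
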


\subsection{The relation between DMP and USFP}
To prove Theorem $7$ below, we need the following lemma: 
\begin{Lem}
Let $P$ be a UT-free monotone polytope, then for any vertex $v$ and the primitive edge vectors $u_1, \dots, u_d$ at $v$, the equality below always holds
$$v= - \sum_{i=1}^d u_i.$$
\end{Lem}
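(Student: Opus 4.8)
The plan is to determine $v$ from the $n$ facet equations of $P$ passing through $v$, reading off their right-hand sides from reflexivity and identifying their normals from smoothness. Since $P$ is monotone it is in particular simple, so exactly $d=n$ facets $F_1,\dots,F_n$ meet at $v$, and for each $i$ the edge through $v$ with primitive direction $u_i$ is $\bigcap_{j\neq i}F_j$. I would write the inequality of $F_j$ as $\langle w_j,x\rangle\geq -1$ with $w_j\in N$ its primitive inner normal. Here reflexivity enters twice: it guarantees that every facet lies at lattice distance exactly $1$ from the origin, so the right-hand side is genuinely $-1$, and that each $w_j$ is a lattice point, namely a vertex of $P^*$. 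In particular $\langle w_j,v\rangle=-1$ for all $j$.

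Next I would compute the pairings $\langle w_j,u_i\rangle$. Moving from $v$ a short distance along the edge $e_i$ keeps us inside $F_j$ for every $j\neq i$, forcing $\langle w_j,u_i\rangle=0$ whenever $i\neq j$, while it leaves $F_i$, so $\langle w_i,u_i\rangle>0$. Smoothness now enters decisively: the primitive edge vectors $u_1,\dots,u_n$ form a lattice basis of $M$, hence their dual basis $u_1^*,\dots,u_n^*$ is a lattice basis of $N$. Expanding $w_j=\sum_i\langle w_j,u_i\rangle u_i^*=\langle w_j,u_j\rangle u_j^*$ and using that both $w_j$ and $u_j^*$ are primitive with a positive pairing yields $\langle w_j,u_j\rangle=1$, so that $\langle w_j,u_i\rangle=\delta_{ij}$.

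It then remains to solve the system. Writing $v=\sum_i c_i u_i$ in the basis $\{u_i\}$ and pairing with $w_j$ gives $c_j=\langle w_j,v\rangle=-1$ for each $j$, whence $v=-\sum_{i=1}^d u_i$, as claimed. The step I expect to require the most care is the normalization $\langle w_i,u_i\rangle=1$, which is precisely where reflexivity (primitive distance-$1$ normals) and smoothness (primitivity of the dual basis) must be combined. By contrast, the \emph{UT-free} hypothesis is not actually used for this identity, which in fact holds for every monotone polytope; I would therefore either invoke it only for consistency with the surrounding context or note that the conclusion is valid under the weaker assumption.
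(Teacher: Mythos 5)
Your proof is correct, and it takes a genuinely different---and more elementary---route than the paper's. The paper argues via Crespo's first-displacement machinery: it takes the facet $F_1$ containing $u_2,\dots,u_d$, invokes Lemma~6 (this is precisely where the UT-free hypothesis enters, to guarantee that the first displacement $F_{1d}$ is normally isomorphic to $F_1$), deduces that $v$ moves along the edge in direction $u_1$ to a vertex $v_2$ with $v_2-v=u_1$, and then iterates inside the successive (still reflexive) displacements down to a one-dimensional polytope where $v_d=-u_d$ is read off; summing the telescoping relations gives the identity. You instead work entirely at the vertex $v$: reflexivity pins the facet equations through $v$ to $\langle w_j,x\rangle=-1$ with $w_j$ a primitive lattice normal, the edge--facet incidences give $\langle w_j,u_i\rangle=0$ for $i\neq j$ and $\langle w_j,u_j\rangle>0$, and smoothness forces $w_j=\langle w_j,u_j\rangle\,u_j^*$ with $\langle w_j,u_j\rangle=1$ by primitivity, so pairing $v=\sum_i c_iu_i$ with each $w_j$ yields $c_j=-1$. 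The delicate normalization $\langle w_j,u_j\rangle=1$ is handled correctly. What your approach buys is a shorter, self-contained argument that avoids Theorem~5 and Lemma~6 altogether, and it exposes that the UT-free hypothesis is indeed superfluous: the identity $v=-\sum_i u_i$ holds for every monotone (smooth reflexive) polytope, a strictly stronger statement than the lemma as written. What the paper's route buys is continuity with the displacement formalism that reappears in the proof of Theorem~7, at the cost of carrying an unnecessary hypothesis.
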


\begin{proof} Let $F_{1}$ be the facet of $P$ containing precisely $u_2, \dots, u_d$ and $F_{1d}$ be its first displacement. Since there is no unimodular triangle, $F_{1d}$ is normally isomorphic to $F_1$ by Lemma $6$. Thus, $v$ is moved from its original position in $F_1$ to a vertex $v_2$ of $F_{1d}$ along the edge containing $u_1$. Notice that $F_1$ has distance $1$ with $F_{1d}$. Then, $$v_2-v=u_1$$
Denote the primitive edge vectors of $F_{1d}$ moved from $u_i$ of $F_1$ again as $u_i$ and let $F_{2}$ be the facet of $F_{1d}$ (recall that it is a lattice polytope) containing precisely $u_3, \dots, u_d$. Then the first displacement of $F_2$, which is called $F_{2d}$, has a vertex $v_3$ moved from vertex $v_2$ in $F_{1d}$. By Theorem $5$, $F_{1d}$ is also reflexive, $F_{2}$ has distance $1$ with $F_{2d}$ and thus,
$$v_3-v_2=u_2$$
By induction, equations $v_i-v_{i-1}=u_{i-1}$ hold for $i \in \{2,\dots,d\}$. After taking the sum of them, we obtain that
$$v_d=v+\sum_{i=1}^{d-1} u_i$$
Obviously, $v_d = -u_d$ for the final 1-dimensional polytope $F_{d-1,d}$.

\par
\end{proof}

With the following consequence, we indeed extend Crespo's result \cite{CPS} to the case of USFPs.

\begin{The}
Any deeply monotone polytope is the dual polytope of a USFP.
\end{The}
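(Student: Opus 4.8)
The plan is to take the candidate $P := Q^{*}$ and show it is a USFP. Since $Q$ is monotone (smooth reflexive), $P = Q^{*}$ is automatically a smooth Fano polytope whose vertices are the primitive facet normals of $Q$; by the remark following Theorem 4 (``for an SFP, unimodular is equivalent to totally unimodular''), it remains only to prove that $P$ is \emph{unimodular}. I would fix a vertex $v$ of $Q$ with primitive edge vectors $u_1,\dots,u_n$. By smoothness these form a $\mathbb{Z}$-basis of $M$, and the normals $w_1,\dots,w_n$ of the facets of $Q$ meeting at $v$ form the dual basis of $N$, so $\langle w_i,u_j\rangle=\delta_{ij}$. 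Passing to the standard form of the coordinate matrix at the facet of $P$ dual to $v$ (Proposition 1) is exactly re-expressing each vertex $w_F$ of $P$ in this dual basis, so the $F$-th row becomes $(\langle u_1,w_F\rangle,\dots,\langle u_n,w_F\rangle)$. Since the $n\times n$ minors of the coordinate matrix and of its standard form agree up to sign, it suffices to prove that this standard form is totally unimodular: that alone forces every $n\times n$ minor of the coordinate matrix to be $0,\pm1$, i.e.\ $P$ unimodular.

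The key step is to feed this into the column version of Lemma 2 (apply Lemma 2 to the transpose, which is totally unimodular iff the matrix is). It says the standard form is totally unimodular if and only if every subset $J\subseteq\{1,\dots,n\}$ of its columns admits a sign pattern $(\epsilon_j)_{j\in J}\in\{-1,1\}^{J}$ whose signed column sum has all entries in $\{-1,0,1\}$. The $F$-th entry of that signed sum is $\sum_{j\in J}\epsilon_j\langle u_j,w_F\rangle=\langle\sum_{j\in J}\epsilon_j u_j,\,w_F\rangle$. Because $Q$ is reflexive and the $w_F$ run over all vertices of $Q^{*}$, a lattice point $y\in M$ satisfies $\langle y,w_F\rangle\in\{-1,0,1\}$ for every $F$ precisely when $y\in Q\cap(-Q)\cap M=\mathcal{E}(Q)$. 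Hence the whole problem reduces to a clean statement about $Q$ alone:
\begin{equation}\label{eq:subsetsum}
\text{for every } J\subseteq\{1,\dots,n\} \text{ there are signs } \epsilon_j\in\{-1,1\} \text{ with } \sum_{j\in J}\epsilon_j u_j\in\mathcal{E}(Q).
\end{equation}

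Several pieces of \eqref{eq:subsetsum} are already in hand. Theorem 6 establishes the cases $|J|\le 2$: each $u_i\in\mathcal{E}(Q)$, and for each pair one of $u_i\pm u_j$ lies in $\mathcal{E}(Q)$. Moreover Lemma 8 gives $v=-\sum_{i} u_i$, so the corner parallelepiped at $v$ (contained in $Q$ by deep smoothness) is the cube $\{\sum_i\mu_i u_i:\mu_i\in[-1,0]\}$ with opposite corner the origin; in particular $-\sum_{j\in J}u_j\in Q$ for \emph{every} $J$, and every standard-form entry $\langle u_i,w_F\rangle$ lies in $\{-1,0,1\}$, recovering the weak Ewald condition from the deeply monotone side. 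The remaining, and main, obstacle is to promote this to the full membership \eqref{eq:subsetsum} for all $J$: deep smoothness at $v$ supplies the point $-\sum_{j\in J}u_j\in Q$ but \emph{not} its negative, and no single fixed sign pattern can work, so one must genuinely choose signs $J$ by $J$ so that both $\sum_{j\in J}\epsilon_j u_j$ and its negative land in $Q$.

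To overcome this I would argue \eqref{eq:subsetsum} by induction on $|J|$, generalising the proof of Theorem 6. The idea is to repeatedly use the first-displacement machinery (Lemma 6, Theorem 5): displacing the facet of $Q$ transverse to $u_k$ produces a lower-dimensional deeply monotone (reflexive, UT-free) polytope on which the edge vectors $\{u_j:j\in J\setminus\{k\}\}$ survive and whose symmetric point set controls $\mathcal{E}(Q)$, letting one adjoin $\pm u_k$ to an inductively obtained signed sum while staying in $Q\cap(-Q)$. Verifying that a symmetric point of the displaced polytope lifts to a symmetric point of $Q$ — i.e.\ that the chosen signed sum \emph{and} its negative are simultaneously retained under displacement — is the delicate part and the crux of the whole argument. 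Once \eqref{eq:subsetsum} is secured, Lemma 2 yields total unimodularity of the standard form, hence $P=Q^{*}$ is a USFP and $Q=(Q^{*})^{*}=P^{*}$, completing the proof.
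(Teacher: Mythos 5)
Your reduction is correct and in fact isolates the precise combinatorial content of the theorem: by the Ghouila--Houri characterisation (the \emph{converse} of Lemma 2 as stated, which does hold and is part of the cited Theorem 19.3 of Schrijver, applied columnwise), total unimodularity of the standard form $\bigl(\langle u_j,w_F\rangle\bigr)_{F,j}$ is equivalent to the statement that for every subset $J\subseteq\{1,\dots,n\}$ of the primitive edge vectors at a fixed vertex of $Q$ there exist signs $\epsilon_j\in\{-1,1\}$ with $\sum_{j\in J}\epsilon_j u_j\in\mathcal{E}(Q)$; your identification of the signed column sums with the pairings $\langle\sum_{j\in J}\epsilon_ju_j,\,w_F\rangle$ and of ``all entries in $\{-1,0,1\}$'' with membership in $Q\cap(-Q)$ is right. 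But the argument stops exactly where the work begins. Theorem 6 covers only $|J|\le 2$, and deep smoothness at $v$ gives $-\sum_{j\in J}u_j\in Q$ but not its negative; for $|J|\ge 3$ you offer only a plan (``repeatedly use the first-displacement machinery \dots adjoin $\pm u_k$ to an inductively obtained signed sum''), and you yourself flag the key step --- that a symmetric point of the displaced polytope lifts to a symmetric point of $Q$ --- as unverified. That lifting is genuinely not automatic: the first displacement $F_0=Q\cap\{x:\langle u,x\rangle=b-1\}$ is an affine slice, and a symmetric point of $F_0$ viewed as a lower-dimensional reflexive polytope need not have both itself and its negative inside $Q$ after the recentering implicit in Lemma 6; nothing in Lemma 6 or Theorem 5 as stated delivers this. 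So the proposal is an elegant reduction plus an unproven claim, not a proof.

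For comparison, the paper takes a different route: it first shows every entry of the standard form lies in $\{-1,0,1\}$ (using Lemma 8, $v=-\sum_i u_i$, and the corner parallelepiped at $v$), then supposes some $t\times t$ submatrix has determinant outside $\{-1,0,1\}$ and eliminates by columns, invoking Theorem 6 (one of $u_1\pm u_j$ lies in $\mathcal{E}(Q)$) to rule out entries of absolute value $2$ after the first pivot, forcing the determinant to be $\pm1$. It is worth noting that the obstacle you identify is not an artifact of your route: after the paper's first pivot the columns represent the vectors $u_j\pm u_1$, so controlling the subsequent pivots raises essentially the same question about signed sums of three or more of the $u_i$ that your condition makes explicit. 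Either way, the case $|J|\ge 3$ (equivalently, the later elimination steps) is where the theorem actually lives, and your submission does not prove it.
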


\begin{proof} Let $P$ be a $d$-dimensional deeply monotone polytope and $P^*$ its dual polytope. For any vertex $v$ of $P$, denote $u_1, \dots, u_d$ as the primitive edge vectors at $v$. In particular, the set $\{ u_1, \dots, u_d \}$ is a basis of the lattice. Collecting all these primitive edge vectors for all vertices, we have $\{ u_1, \dots, u_m \}$ the set of all primitive edge vectors of $P$. 
\\
For any vertex $v$, we now consider the dual polyhedral cone of the cone generated by $u_1, \dots, u_d$. Let the edge vectors of the dual polyhedral cone be $u^{*}_1, \dots, u^{*}_d$ with $u^{*}_i (u_i) >0$, $u^{*}_i (u_j) =0$, where $1 \leq i \neq j \leq d$. By properties of reflexive polytopes, the dual cone is a cone over a facet of the dual polytope $P^*$ and vectors $u^{*}_1, \dots, u^{*}_d$ form a basis of the dual lattice. For the coordinate matrix $M^{*}$ of $P^{*}$, w.l.o.g, let the first $d$ rows be the vectors $u^{*}_1, \dots, u^{*}_d$, denote other rows as $u^{*}_{d+1}, \dots, u^{*}_m$.
\\
\\
By the previous lemma, $v=-u_1-\dots-u_d$. Since $P$ is deeply monotone, the parallepiped at $v$ is contained in the polytope. So we may have $v+u_1+\dots+u_{i-1}+u_{i+1}+\dots+u_d$ contained in the polytope, i.e., $-u_i$ lies in the polytope. Since $P^{*}$ is dual to $P$, we have $-u^{*}_i (u_k) \geq -1$, for any $k \in \{1,\dots,m\}$. Then with the condition $u^{*}_i (u_i) >0$ above, it yields that for any $i \in \{1,\dots,d\}$, $u^{*}_i (u_i) = 1$. Furthermore, because $u^{*}_i (u_j) = 0$ for each $i \neq j$, $\{u^{*}_i\ \vert \ i=1,\dots,d\}$ is a dual basis of $\{u_i\ \vert \ i=1,\dots,d\}$.
\\
\\
Since $u^{*}_1, \dots, u^{*}_d$ form a basis of the lattice, by a unimodualr transformation $T$, they can be transformed to the standard unit vectors $e^{*}_1, \dots, e^{*}_d$. After applying $T$ on the matrix $M^{*}$, we will obtain a new matrix $M'$, whose first $d$ rows are $e^{*}_1, \dots, e^{*}_d$ and others are again denoted as $u^{*}_{d+1}, \dots, u^{*}_m$. Meanwhile, $u_1,\dots,u_d$ will be transformed to the standard unit vectors $e_1,\dots,e_d$ in the dual lattice.
\\
\\
By $-1 \leq e_i (u^{*}_k) \leq 1$, ($\forall i\in \{1,\dots, d\}, \forall k \in \{1,\dots, m\}$), the entries of $u^{*}_k$ can only be $\pm 1$ or $0$.
\\
\\
Now, we assume that $M'$ is not totally unimodular. Then there exists a $t \times t$-submatrix ($t\leq d$) $A$ s.t. its determinant is an integer other than $-1,0,1$. Denote the entries of $A$ as $a_{ij}$, for $i,j \in \{1,\dots,t\}$. W.l.o.g, let $a_{11} \neq 0$.
For the first row, we wish to reduce all the entries except for $a_{11}$ to 0:
\begin{itemize}
\item[1.] if $a_{1j}=0$, do nothing. Otherwise,
\item[2.] if $a_{1j} = a_{11}$, then subtract the $j$-th column by the 1st column;
\item[3.] if $a_{1j} = - a_{11}$, then add the 1st column to the $j$-th column.
\end{itemize}
\textbf{Claim:} After doing these operations, the new matrix, which is denoted as $A_1$, has no entries other than $-1,0,1$.
\\
\textbf{Proof of the claim:} the entries can only be $-2,-1,0,1,2$. Assume that $-2$ or $2$ appears in the $j$-th column, then in the matrix N, there exists a $2 \times 2$-submatrix whose 1st column and 2nd column are obtained from the 1st column and $j$-th column of N, respectively, which has the form:
$$
\begin{pmatrix}
a & a \\
a & -a
\end{pmatrix}
or
\begin{pmatrix}
a & -a \\
a & a
\end{pmatrix}
$$
where $a$ is $1$ or $-1$.
However, they contradicts with the requirement $-1 \leq (e_1+e_j)(u^{*}_k) \leq 1$ or $-1 \leq (e_1-e_j)(u^{*}_k) \leq 1$ in Theorem $6$, for any $k \in \{1,\dots,m\}$.
\par
$\hfill\square$
\\
Then we consider the 2nd row and do the same as above. Since the determinant of the matrix is not 0, there must be a non-zero entry in the 2nd column. By rearranging the rows, we can make sure that $a_{22} \neq 0$.
After going through all the rows, the final new matrix which is denoted as $A_t$ is a lower-triangle matrix with $a_{ii}=\pm 1$, $i \in \{1,\dots,t\}$. Thus $det(A)=det(A_t) = a_{11} \cdots a_{tt} = \pm 1$, contradiction.
\par
\end{proof}

\begin{Rem}
With the results above, we have the following relations:
$$
\begin{matrix}
\{\textnormal{Deeply monotone polytopes}\} & \subset & \{\textnormal{Duals of USFPs}\} \\
\cap & & \cup \\
\{\textnormal{UT-free monotone polytopes}\} & & \{\textnormal{Duals of SFPdGs}\}
\end{matrix}
$$
\end{Rem}
\par
Now, we know that USFPs satisfy three Ewald conditions by Theorem $1$, and then, DMP and SFPdG follow from the inclusions.  Naturally, one may conjecture:

\begin{Con}
UT-free  monotone  polytopes are the duals of USFPs (up to unimodular equivalence). In particular, all the UT-free monotone polytopes satisfy the three star Ewald conditions.
\end{Con}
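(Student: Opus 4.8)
The plan is to reduce the statement to a total-unimodularity check. Since $P$ is monotone (smooth reflexive), its dual polytope $P^{*}$ is automatically a smooth Fano polytope, so by the equivalence noted after Proposition~1 (``unimodular'' $=$ ``totally unimodular'' for smooth Fano polytopes) it suffices to prove that the coordinate matrix of $P^{*}$ is totally unimodular. I would fix an arbitrary vertex $v$ of $P$ with primitive edge vectors $u_{1},\dots,u_{d}$ (a lattice basis, by smoothness) and work in coordinates adapted to $v$: the cone dual to $\mathrm{cone}(u_{1},\dots,u_{d})$ is the cone over the facet of $P^{*}$ corresponding to $v$, its primitive generators $u^{*}_{1},\dots,u^{*}_{d}$ are the vertices of that facet, and they satisfy $u^{*}_{i}(u_{i})>0$ and $u^{*}_{i}(u_{j})=0$ for $i\neq j$.

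The first task is to show that $\{u^{*}_{i}\}$ is exactly the dual basis of $\{u_{i}\}$, i.e. $u^{*}_{i}(u_{j})=\delta_{ij}$. Orthogonality for $i\neq j$ is built into the dual cone; for the normalisation I would combine the preceding lemma, which gives $v=-\sum_{i}u_{i}$, with deep smoothness: the corner parallelepiped at $v$ lies in $P$, so its vertex $v+\sum_{j\neq i}u_{j}=-u_{i}$ lies in $P$, whence $u^{*}_{i}(-u_{i})\geq-1$, that is $u^{*}_{i}(u_{i})\leq 1$; integrality together with $u^{*}_{i}(u_{i})>0$ forces $u^{*}_{i}(u_{i})=1$. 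Applying the unimodular transformation sending $u^{*}_{1},\dots,u^{*}_{d}$ to the standard basis, the coordinate matrix $M'$ of $P^{*}$ acquires an identity block in its first $d$ rows, and its $(k,i)$ entry becomes $u^{*}_{k}(u_{i})$. Because Theorem~6 places each $u_{i}$ into the symmetric point set $\mathcal{E}(P)$, both $u_{i}$ and $-u_{i}$ lie in $P$, so $-1\le u^{*}_{k}(u_{i})\le 1$; hence every entry of $M'$ lies in $\{-1,0,1\}$.

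It remains to upgrade this to total unimodularity. My preferred route is to invoke the Ghouila--Houri criterion (the characterisation underlying Lemma~2) applied to the columns of $M'$: the matrix is totally unimodular as soon as, for every subset $C\subseteq\{1,\dots,d\}$ of columns, one can choose signs $\varepsilon_{i}\in\{\pm1\}$ so that the signed column sum has all entries in $\{-1,0,1\}$. Since the $k$-th entry of that sum equals $u^{*}_{k}\!\bigl(\sum_{i\in C}\varepsilon_{i}u_{i}\bigr)$ and the $u^{*}_{k}$ range over all vertices of $P^{*}$, this is precisely the requirement that $\sum_{i\in C}\varepsilon_{i}u_{i}\in\mathcal{E}(P)$. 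For $|C|\le 2$ this is exactly the content of Theorem~6. Thus the whole theorem reduces to a \emph{multi-vector Ewald statement}: for any collection of primitive edge vectors at a common vertex of a deeply monotone polytope, some signed sum lies in $\mathcal{E}(P)$.

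Proving that multi-vector statement is where I expect the real difficulty to be, and it is the natural place to spend the deep-smoothness hypothesis in full. I would attempt an induction on $|C|$, mimicking the first-displacement argument used for the preceding lemma: after using containment of the corner parallelepiped to locate $-\sum_{i\in C}u_{i}$ and its relatives in $P$, one flips one sign at a time while staying inside $P\cap(-P)$, controlling the process through repeated first displacements (Lemma~6, Theorem~5). The alternative, closer to a direct approach, is an explicit elimination on $M'$: clear the first row of a putative bad square submatrix by column operations, noting that the sign pattern forced on that row by Theorem~6 selects which of $u_{i}\pm u_{j}$ is symmetric and thereby forbids a $2\times2$ block of determinant $\pm2$. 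The subtlety I would resolve carefully is the recursion, since after one elimination step the active columns represent modified directions $u_{j}\mp u_{1}$ rather than genuine edge vectors, so the pairwise guarantee of Theorem~6 no longer applies verbatim and must be replaced by the stronger multi-vector property above.
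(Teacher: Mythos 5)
There is a genuine gap, and it is structural rather than technical: the statement you are asked to prove is Conjecture~2, which the paper itself leaves open, and your argument only engages with the strictly smaller class of \emph{deeply} monotone polytopes. Every load-bearing ingredient in your sketch --- the containment of the corner parallelepiped at $v$ (used to place $-u_i$ in $P$ and normalise $u^*_i(u_i)=1$), and Theorem~6's guarantee that $u_1$, $u_2$ and one of $u_1\pm u_2$ lie in $\mathcal{E}(P)$ --- is stated and proved in the literature only under the deep-smoothness hypothesis. A UT-free monotone polytope need not be deeply monotone: by Theorem~5, deep smoothness requires $P$ \emph{and the first displacements of all its faces} to be UT-free, and the paper's own table records $74$ UT-free versus $72$ deeply monotone polytopes in dimension $4$, and $336$ versus $300$ in dimension $5$. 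So your argument, even if completed, reproduces Theorem~7 of the paper (DMPs are duals of USFPs) rather than the conjecture. Nowhere do you supply a substitute for the corner-parallelepiped containment or for Theorem~6 that works from UT-freeness of $P$ alone; for instance, for a merely UT-free monotone polytope it is not established that $-u_i=v+\sum_{j\neq i}u_j$ lies in $P$, which is the very first place your normalisation and your $\{-1,0,1\}$ entry bound are invoked.

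Separately, even read as a proof of Theorem~7, the argument is incomplete by your own admission: you reduce total unimodularity to a ``multi-vector Ewald statement'' (for every subset $C$ of edge directions at a common vertex, some signed sum $\sum_{i\in C}\varepsilon_i u_i$ lies in $\mathcal{E}(P)$), correctly observe that Theorem~6 only covers $|C|\le 2$ and that after one column-elimination step the active columns no longer represent genuine edge vectors, and then leave that statement unproved, offering only two candidate strategies. That reduction and the $2\times 2$ determinant obstruction are essentially the same elimination the paper runs in its proof of Theorem~7, so the part of your proposal that is worked out is the part the paper already contains; the part that would be new --- bridging from deeply monotone to UT-free monotone, or proving the multi-vector property --- is exactly the part that is missing.
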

\begin{Rem}
Furthermore, we can replace the `USFPs' in Conjecture $2$ by the `SFPdGs', i.e., 
$$\{\textnormal{UT-free monotone polytopes}\} \subset \{\textnormal{Duals of SFPdGs}\}.$$
For example, the polytopes of Example $2$ and $3$ are both not SFPdGs and recall that the dual polytope of the former one is not UT-free. In Example $3$, the dual polytope has only six simplicial $2$-faces and one can directly observe unimodular triangles. 
\end{Rem}

\begin{Rem}
The numbers of these types of polytopes, up to dimension $5$ are listed below:
\\
\begin{table}[!h]
\renewcommand{\arraystretch}{1.5}
\label{table_example}
\centering
\begin{tabular}{|c|c|c|c|c|c|}
\hline
Dimension & SFPs & UT-free & DMPs & SFPdGs & USFPs \\
\hline
2 & 5 & 5 & 5 & 5 & 5\\
\hline 
3 & 18 & 16 & 16 & 16 & 16\\
\hline 
4 & 124 & 74 & 72 & 95 & 96 \\
\hline
5 & 866 & 336 & 300 & 551 & 554 \\
\hline 
\end{tabular}
\end{table}
\end{Rem}

\end{document}